\newtheorem{theorem}{Theorem}[section]
\newtheorem{lemma}[theorem]{Lemma}
\newtheorem{corollary}[theorem]{Corollary}
\theoremstyle{definition}
\newtheorem{definition}[theorem]{Definition}
\newtheorem{question}[theorem]{Question}
\theoremstyle{remark}
\newtheorem{remark}[theorem]{Remark}
\newtheorem{notation}[theorem]{Notation}
\numberwithin{equation}{section}
\newcommand{\p}{\medskip \noindent}
\newcommand{\PP}{\mathscr{P}}
\newcommand{\cont}{{2^{\aleph_0}}}
\newcommand{\till}{{\upharpoonright}}
\newcommand{\ZF}{{\rm \sf ZF}}
\newcommand{\ZFC}{{\rm \sf ZFC}}
\newcommand{\AD}{{\rm \sf AD}}
\newcommand{\Con}{{{\rm Con}}}
\newcommand{\SIGMA}{\boldsymbol{\Sigma}}
\newcommand{\DELTA}{\boldsymbol{\Delta}}
\newcommand{\PI}{\boldsymbol{\Pi}}
\newcommand{\dw}{2^\omega}
\newcommand{\dlw}{2^{<\omega}}
\newcommand{\wuw}{[\omega]^\omega}
\newcommand{\IP}{\mathbb{P}}
\newcommand{\IQ}{\mathbb{Q}}
\newcommand{\B}{\mathcal{B}}
\newcommand{\F}{\mathcal{F}}
\newcommand{\dom}{{\rm dom}}
\newcommand{\M}{\mathcal{M}}
\newcommand{\IR}{\mathbb{R}}
\newcommand{\IC}{\mathbb{C}}
\newcommand{\cov}{{\rm cov}}
\newcommand{\non}{{\rm non}}
\newcommand{\bb}{\mathfrak{b}}
\newcommand{\dd}{\mathfrak{d}}
\newtheorem{Thm}{Theorem}[section]
\theoremstyle{definition}
\newtheorem{Def}[Thm]{Definition}
\newcommand{\A}{\mathcal{A}}
\newcommand{\I}{\mathcal{I}}
\newcommand{\ii}{{\mathfrak{i}}}
\newcommand{\iicl}{{\mathfrak{i}_{\rm cl}}}
\newcommand{\iiB}{{\mathfrak{i}_{B}}}
\newcommand{\aaa}{{\mathfrak{a}}}
\newcommand{\aacl}{{\mathfrak{a}_{\rm cl}}}
\newcommand{\aaB}{{\mathfrak{a}_{B}}}
\newcommand{\bbb}{{\mathfrak{b}}}
\renewcommand{\aa}{{a_1, \dots, a_n}}
\renewcommand{\bb}{{b_1, \dots, b_\ell}}
\newcommand{\bbP}{{\mathbb{P}}}
\newcommand{\calA}{{\mathscr{A}}}
\renewcommand{\A}{{\calA}}
\renewcommand{\B}{{\mathscr{B}}}
\newcommand{\al}{{\alpha}}
\newcommand{\calE}{{{\mathcal{E}}}}
\newcommand{\FF}{{\rm FF}}
\begin{document}

\title{Definable Maximal Independent Families}

\author[Brendle]{J\"org Brendle}
\address{Graduate School of System Informatics, Kobe University, Rokkodai 1-1, Nada, Kobe 657-8501, Japan} 
\email{brendle@kobe-u.ac.jp}
 
\author[Fischer]{Vera Fischer}
 \address{Kurt G\"odel Research Center, Universit\"at Wien, W\"ahringer Stra{\ss}e 25, 1090 Vienna, Austria}
\email{vera.fischer@univie.ac.at}

\author[Khomskii]{Yurii Khomskii}
\address{Universit\"at Hamburg, Fachbereich Mathematik, Bundesstra{\ss}e 55, 20146 Hamburg, Germany}
\email{yurii@deds.nl}

\thanks{Partially supported by Grants-in-Aid for Scientific Research (C) 15K04977 and 18K03398, Japan Society for the Promotion of Science (Brendle); the Austrian Science Foundation (FWF) by the START Grant number Y1012-N35 (Fischer); and  the European Commission under a Marie Curie Individual Fellowship (H2020-MSCA-IF-2015) through the project number 706219, acronym REGPROP (Khomskii).   Partially supported by the Isaac Newton Institute for Mathematical Sciences in the programme Mathematical, Foundational and Computational Aspects of the Higher Infinite (HIF) funded by EPSRC grant EP/K032208/1 (Brendle and Khomskii).}

\subjclass[2010]{03E15, 03E17, 03E35}



\begin{abstract} We study maximal independent families (m.i.f.) in the projective hierarchy. We show that (a) the existence of a $\SIGMA^1_2$ m.i.f.\ is equivalent to the existence of a $\PI^1_1$ m.i.f., (b) in the Cohen model, there are no projective maximal independent families, and (c) in the Sacks model, there is a $\PI^1_1$ m.i.f. We also  consider a new cardinal invariant related to the question of destroying or preserving maximal independent families. \end{abstract}

\maketitle

\section{Introduction}

In descriptive set theory, a recurrent line of inquiry is whether objects  defined in a non-constructive way can exist on given levels of the projective hierarchy. For example, an ultrafilter cannot be $\SIGMA^1_1$. Mathias \cite{HappyFamilies} proved that there are no $\SIGMA^1_1$ maximal almost disjoint (mad) families. If $V=L$ then there are $\DELTA^1_2$ ultrafilters and $\PI^1_1$ mad families. 

In this paper, we look at \emph{maximal independent families} from the definable point of view.  Our main results are Theorem \ref{equivalence}, Theorem \ref{d} and Theorem \ref{mainsackstheorem}, stating that the existence of a $\SIGMA^1_2$ m.i.f. is equivalent to the existence of a $\PI^1_1$ m.i.f., that in the Cohen model there are no projective m.i.f.'s, and that in the Sacks model there is a $\PI^1_1$ m.i.f., respectively.

\begin{definition} A family $\I \subseteq \wuw$ is  \emph{independent} if for all $a_1, \dots, a_n \in \I$ and different $b_1, \dots, b_\ell \in \I$
$$a_1 \cap \dots a_n \cap (\omega \setminus b_1) \cap \dots \cap (\omega \setminus b_\ell) \text{ is infinite.}$$   A family $\I \subseteq \wuw$ is called a \emph{maximal independent family} (\emph{m.i.f.}) if it is independent and maximal with regard to this property. \end{definition}

We will typically use the following abbreviation:
$$\sigma(\bar{a};\bar{b}) := a_1 \cap \dots a_n \cap (\omega \setminus b_1) \cap \dots \cap (\omega \setminus b_\ell)$$
where it will be assumed that all of the $a_i$ are different from all of the $b_j$. Note that maximality of $\I$ is equivalent to:
$$\forall X \in \wuw \; \exists \aa, \bb \in \I \text{ s.t. }\;  \sigma(\bar{a}; \bar{b}) \subseteq^* X \; \text{ or } \;   \sigma(\bar{a}; \bar{b})  \cap X  =^* \varnothing.$$

By identifying the space $\wuw$ with $\dw$ via characteristic functions, one can consider independent families as subsets of the reals and study their complexity in the projective hieararchy.

\begin{lemma} If $\I$ is a $\SIGMA^1_n$ m.i.f. then it is $\DELTA^1_n$. \end{lemma}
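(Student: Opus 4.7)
The plan is to argue that $X \notin \I$ also admits a $\SIGMA^1_n$ definition, which combined with the hypothesis yields the $\DELTA^1_n$ conclusion. The key observation, made possible by maximality of $\I$, is the equivalence
\[
X \notin \I \;\Longleftrightarrow\; \exists\, \bar{a}, \bar{b} \in \I \setminus \{X\} \text{ distinct, with } \sigma(\bar{a}; \bar{b}) \subseteq^* X \text{ or } \sigma(\bar{a}; \bar{b}) \cap X =^* \varnothing.
\]

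For the forward direction I would invoke maximality: since $X \notin \I$, the family $\I \cup \{X\}$ is not independent, so some finite tuple of distinct elements of $\I \cup \{X\}$ witnesses dependence. Since $\I$ itself is independent, $X$ must appear in this tuple; a short case split then moves $X$ to the opposite side. If $X$ was among the positive entries, the remaining tuple in $\I$ gives $\sigma \cap X =^* \varnothing$; if it was among the negated entries, one instead obtains $\sigma \subseteq^* X$. For the reverse direction, if $X$ were in $\I$ and such witnesses existed, reinserting $X$ on the appropriate side would yield distinct elements of $\I$ with finite $\sigma$, contradicting independence of $\I$.

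Once this equivalence is in hand, the complexity calculation is routine: the right-hand side is a finite existential over the $\SIGMA^1_n$ relation of membership in $\I$, combined with a Borel condition on $\sigma(\bar{a}; \bar{b})$ and $X$ (the quantification over the lengths and entries of the finite tuples $\bar{a}, \bar{b}$ reduces to a single real existential). Hence $X \notin \I$ is $\SIGMA^1_n$, which makes $X \in \I$ a $\PI^1_n$ condition, and so $\I$ is $\DELTA^1_n$. The only delicate point I foresee is the case split ensuring the witnesses can be chosen disjoint from $\{X\}$; this is a minor bookkeeping matter rather than a substantial obstacle.
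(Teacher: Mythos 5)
Your proof is correct and follows essentially the same route as the paper: both establish that $X \notin \I$ is equivalent to the existence of distinct witnesses $\bar{a}, \bar{b} \in \I \setminus \{X\}$ with $\sigma(\bar{a};\bar{b}) \subseteq^* X$ or $\sigma(\bar{a};\bar{b}) \cap X =^* \varnothing$, and then read off that this is a $\SIGMA^1_n$ condition. You simply spell out both directions of the equivalence (via maximality and independence) in more detail than the paper, which states it without justification.
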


\begin{proof} Suppose $\I$ is a $\SIGMA^1_n$ m.i.f. Then  $X \notin \I$ iff: 
$$ \exists \aa, \bb \in \I \; \text{ s.t. } X \notin \{\aa, \bb\}  \; \text{ and }$$
$$  \sigma(\bar{a},\bar{b}) \subseteq^* X \; \text{ or } \;  \sigma(\bar{a},\bar{b})  \cap X =^* \varnothing. $$  This  statement is easily seen to be $\SIGMA^1_n$.  \end{proof}

\begin{theorem}[Miller; {\cite{MillerPI11}}] There is no analytic m.i.f. \end{theorem}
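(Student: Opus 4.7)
Plan:

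My plan is a direct diagonalization argument. By Lemma 1.3 I may assume $\I$ is $\DELTA^1_1$ (Borel). Supposing for contradiction that such a Borel m.i.f.\ exists, I aim to construct a real $X \in \wuw \setminus \I$ such that $\I \cup \{X\}$ remains independent, contradicting maximality.

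I build $X$ as the union of an increasing sequence $s_0 \subseteq s_1 \subseteq \cdots$ of finite binary strings. Fix a countable elementary submodel $M \prec H(\theta)$ containing a Borel code for $\I$, and set $\I_0 := \I \cap M$ (a countable independent subfamily). Enumerate all pairs $(\aa, \bb)$ of finite disjoint tuples from $\I_0$ in a sequence in which each pair appears infinitely often. At stage $n$, for the current pair $(\aa_n, \bb_n)$: since $\sigma(\aa_n; \bb_n)$ is infinite by independence of $\I$, I extend $s_n$ so as to place a fresh element of $\sigma(\aa_n; \bb_n)$ into $X$ and leave another fresh element of it out. Simultaneously, using that $\wuw \setminus \I$ is a nonempty Borel set (nonempty because, for instance, no cofinite set can belong to an independent family), one extends $s_n$ further in a tree-theoretic fashion along a Borel code for $\wuw \setminus \I$, so that the final $X := \bigcup_n s_n$ lies in $\wuw \setminus \I$.

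One then verifies that $X$ contradicts maximality. By the splitting construction, $\sigma(\aa;\bb) \cap X$ and $\sigma(\aa;\bb) \setminus X$ are both infinite for every pair $(\aa, \bb)$ from $\I_0$. To transfer this from $\I_0$ to all of $\I$, I argue by Shoenfield-style absoluteness: a putative counterexample pair $(\aa, \bb) \in \I$ witnessing $\sigma(\aa;\bb) \subseteq^* X$ (or $\sigma(\aa;\bb) \cap X =^* \emptyset$) gives a $\SIGMA^1_1$ property of the pair $(X, \I)$, which reflects into the elementary submodel $M$ and so yields a pair already in $\I_0$---contradicting the bookkeeping.

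The main obstacle is the simultaneous handling of two distinct requirements in a single fusion: the two-sided splitting condition for each $\sigma(\aa_n;\bb_n)$, and the avoidance of the Borel set $\I$. Equally delicate is the reflection step transferring independence from $\I_0$ to all of $\I$; it is here that the analyticity of $\I$ is essential, since Shoenfield absoluteness requires a sufficiently low projective complexity of the defining formula. At higher projective complexities no such reflection is available, as witnessed by Theorem \ref{mainsackstheorem} of this paper, which produces a $\PI^1_1$ m.i.f.\ in the Sacks model.
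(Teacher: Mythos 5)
There is a genuine gap at the reflection step, and it is the heart of the matter. You build $X$ so that every Boolean combination $\sigma(\bar a;\bar b)$ with $\bar a,\bar b$ from the countable family $\I_0=\I\cap M$ is split into two infinite pieces by $X$, and you then claim that a maximality witness $(\bar a,\bar b)\in\I$ against $X$ can be reflected into $\I_0$. But $X\notin M$, so elementarity of $M$ does not apply to the $\SIGMA^1_1$ statement $\Phi(X)$; what Mostowski/Shoenfield absoluteness gives you is witnesses in $N\cap\I$ for a transitive model $N$ containing $X$ and a code for $\I$ (e.g.\ $N=\overbar{M}[X]$), not witnesses in $M\cap\I$. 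Since an analytic m.i.f.\ is uncountable and hence contains a perfect set, any model containing the new real $X$ will in general contain elements of $\I$ that are not in $\I_0$, and nothing prevents such new elements from witnessing, say, $\sigma(\bar a;\bar b)\subseteq^* X$ (indeed infinite subsets of $X$ exist in $\overbar{M}[X]$, so genericity alone gives no contradiction). Your bookkeeping only constrains pairs from $\I_0$, so no contradiction results. A secondary problem: "extending $s_n$ along a Borel code for $\wuw\setminus\I$" is not a legitimate finite-extension move; to land a fusion in $\wuw\setminus\I$ while meeting the dense splitting requirements you would need to know that $\I$ is meager, and proving that already requires the combinatorial core of the real proof.

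For comparison, the paper's argument (Lemma~\ref{b} together with Corollary~\ref{cora}) avoids single-real diagonalization entirely. From maximality it covers $\wuw$ by the sets $H$ and $K$ of reals decided by some Boolean combination from $\I$; by the Baire property one of them is comeager in a basic open set, which therefore contains a perfect \emph{almost disjoint} (or almost covering) tree $T$ of Cohen reals; a $\Delta$-system argument on the witnessing tuples of uncountably many branches produces two branches $X\neq Y$ of $T$ whose combined tuples are still disjoint, whence $\sigma(\bar a^X\cup\bar a^Y;\bar b^X\cup\bar b^Y)\subseteq^* X\cap Y$ is finite, contradicting independence. It is precisely this two-real trick, exploiting $X\cap Y=^*\varnothing$, that substitutes for the reflection your argument would need and cannot have.
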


An analysis of Miller's proof shows that it really only uses the Baire property of analytic sets. In particular, if we use $\SIGMA^1_n(\IC)$ to denote the statement ``all $\SIGMA^1_n$ sets have the Baire property'', then Miller's proof shows that for any $n$, $\SIGMA^1_n(\IC)$ implies that there are no $\SIGMA^1_n$ m.i.f. (see Corollary~\ref{corb}). It follows that 
\begin{enumerate}
\item In the Hechler model, as well as the Amoeba or Amoeba-for-category model, there is no $\SIGMA^1_2$ m.i.f.,
\item In the Solovay model (the L\'evy-collapse of an inaccessible), as well as Shelah's model for the Baire Property without inaccessibles \cite{ShelahInaccessible}, there is no projective m.i.f.,
\item In $L(\IR)$ of the above two models, there is no m.i.f. at all, and
\item   $\AD \Rightarrow $ there is no  m.i.f. (this follows because under $\AD$ all sets of reals have the property of Baire, see again~\ref{corb}).
\end{enumerate}

In this paper, we   prove a stronger result, namely, that in the Cohen model there is no projective m.i.f., and in the $L(\IR)$ of the Cohen model there is no m.i.f. at all. Notice that since  $\SIGMA^1_2(\IC)$ is false in the Cohen model, this shows that the converse implication ``$\SIGMA^1_n(\IC)\; \Leftarrow \;\nexists \SIGMA^1_n$-m.i.f.'' consistently fails.

\bigskip

On the other hand, it is easy to construct a m.i.f. by induction using a wellorder of the reals, and thus, it is not hard to see that  in $L$ there exists a $\SIGMA^1_2$ m.i.f. In \cite{MillerPI11} Miller used sophisticated coding techniques to show that, in fact, this m.i.f. can be constructed in a  $\PI^1_1$ fashion. Building on an idea of Asger T\"ornquist \cite{Tornquist}, we  show that in fact this proof is unnecessary since one can derive, directly in $\ZFC$, that if there exists a $\SIGMA^1_2$ m.i.f. then there exists a $\PI^1_1$ m.i.f.

\bigskip A construction originally attributed to Eisworth and Shelah (see \cite{HandbookCardinals}), implicitly appearing in Shelah's proof of $\mathfrak{i} < \mathfrak{u}$ \cite{ShelahCon} and elaborated  in \cite{FischerMontoya}, yields a  forcing for generically adding a Sacks-indestructible m.i.f.   In this paper we show that this family can be defined in a $\SIGMA^1_2$ way in $L$. Therefore, in the countable support iteration of Sacks forcing, as well as the product of Sacks forcing, starting from $L$, there exists a $\SIGMA^1_2$ m.i.f., and hence a $\PI^1_1$ m.i.f. In fact, a slight modification produces a family which is  indestructible by the poset used in \cite{ShelahCon}, which shows that the  consistency of $\mathfrak{i}<\mathfrak{u}$ can be witnessed by a $\PI^1_1$ m.i.f.



\section{$\SIGMA^1_2$ and $\PI^1_1$ m.i.f's} \label{SectionPI11}

\begin{theorem} \label{equivalence} If there exists a $\SIGMA^1_2$ m.i.f. then there exists a $\PI^1_1$ m.i.f. \end{theorem}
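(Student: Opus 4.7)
The approach follows T\"ornquist's argument in the mad family setting \cite{Tornquist}. Suppose $\I$ is a $\SIGMA^1_2$ m.i.f.\ with real parameter $a$. The strategy decomposes into two moves: (1) derive $\omega_1^{L[a]} = \omega_1$, and (2) invoke Miller's $\PI^1_1$ m.i.f.\ construction inside $L[a]$ and verify that its output remains a m.i.f.\ in $V$.

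For (1), represent $\I = p[T]$ via the Shoenfield tree $T \subseteq (\omega \times \omega_1)^{<\omega}$. For each $\alpha < \omega_1$ the truncation $\I_\alpha := p[T \cap (\omega \times \alpha)^{<\omega}]$ is an analytic independent subfamily of $\I$, hence non-maximal by Miller's theorem, and $\I = \bigcup_{\alpha < \omega_1} \I_\alpha$. Since every m.i.f.\ is uncountable, $|\I| \geq \aleph_1$. Mansfield's perfect set dichotomy for $\SIGMA^1_2(a)$ sets then splits into two subcases: if $\I \subseteq L[a]$, then $\aleph_1$-many constructible reals immediately force $\omega_1^{L[a]} = \omega_1$; if $\I$ contains a perfect subset, T\"ornquist's approach uses the cofinal strictness of the filtration $\{\I_\alpha\}_{\alpha < \omega_1}$ together with the tree structure to extract a coding of $\omega_1$ into $L[a]$, again forcing the coincidence.

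For (2), inside $L[a]$ run Miller's \cite{MillerPI11} transfinite construction of a $\PI^1_1$ m.i.f., recursing along the $\SIGMA^1_2(a)$ good wellorder of reals of $L[a]$. The construction uses almost-disjoint coding to cast the maximality clause as a $\PI^1_1$ assertion, yielding a $\PI^1_1(a)$-definable family $\mathcal{J}$. The assertion that a fixed $\PI^1_1(a)$ formula defines a m.i.f.\ is $\PI^1_2(a)$ (by standard quantifier counting, together with the Lemma above), so by Shoenfield absoluteness between $L[a]$ and $V$, $\mathcal{J}$ is a m.i.f.\ in $V$ as well.

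The hard part is the perfect-subset alternative in (1): a perfect set of pairwise independent reals is not ruled out by Miller's theorem (which forbids analytic m.i.f.'s, not merely analytic independent families), so forcing $\omega_1^{L[a]} = \omega_1$ from this case demands a delicate analysis of the Shoenfield tree restricted to the perfect set; this is the technical heart of the T\"ornquist-style argument. A secondary concern is absorbing the parameter $a$ to pass from lightface $\PI^1_1(a)$ to the stated boldface $\PI^1_1$ complexity.
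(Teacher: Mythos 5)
Your proposal does not follow the paper's argument, and as written it has a fatal gap in step (2). The claim that ``$\mathcal{J}$ is a m.i.f.'' is $\PI^1_2(a)$ is wrong: if $\mathcal{J}$ is $\PI^1_1(a)$, then maximality reads $\forall X\, \exists \bar{a},\bar{b}\,(\bar{a},\bar{b}\in\mathcal{J} \;\wedge\; \sigma(\bar a;\bar b)\subseteq^* X \text{ or } \sigma(\bar a;\bar b)\cap X=^*\varnothing)$; the inner existential over the $\PI^1_1$ (or, using the Lemma, $\DELTA^1_2$) set $\mathcal{J}$ is $\SIGMA^1_2$, so the whole statement is $\PI^1_3(a)$ and Shoenfield absoluteness does not transfer it from $L[a]$ to $V$. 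This is not a repairable bookkeeping issue: maximality of a $\PI^1_1$ family is genuinely non-absolute. Indeed, your scheme would conclude that whenever $\omega_1^{L}=\omega_1$ there is a $\PI^1_1$ m.i.f.\ in $V$, but Theorem \ref{d} of this paper shows that in the Cohen extension of $L$ (where $\omega_1^{L}=\omega_1$) there is no projective m.i.f.\ at all. Step (1) is also incomplete: the perfect-subset case, which you yourself flag as ``the technical heart,'' is only asserted, and it is not established in the paper (nor, to my knowledge, anywhere) that the existence of a $\SIGMA^1_2(a)$ m.i.f.\ implies $\omega_1^{L[a]}=\omega_1$; this is in the vicinity of the open Question 5.3(1). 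Finally, this is not T\"ornquist's argument: his idea, and the paper's proof, is a direct $\ZFC$ computation with no constructibility or absoluteness. One writes $\I_0=p[F_0]$ with $F_0$ a $\PI^1_1$ subset of $(\wuw)^2$, applies $\PI^1_1$-uniformization to get a $\PI^1_1$ function $F\subseteq F_0$, and replaces each $x\in\I_0$ by $g(x,y)=x\cup\{\chi_y\till n : n<\omega\}$ inside $\omega\,\dot\cup\,\dlw$, where $y$ is the unique $F$-witness; since the witness is coded into the set itself, membership in $\I=g[F]$ becomes $\PI^1_1$, and independence and maximality of $\I$ are verified directly (maximality requires a small trick with two auxiliary elements $t_0,t_1$ to control the $\dlw$-parts). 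If you want to salvage your outline you would have to replace step (2) entirely by such a witness-coding argument, at which point step (1) becomes unnecessary.
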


\begin{proof} Suppose $\I_0$ is a $\SIGMA^1_2$ maximal independent family. Let $F_0 \subseteq \left(\wuw\right)^2$ be a $\PI^1_1$ set such that $\I_0$ is the projection of $F_0$. Consider the space $\omega \; \dot{\cup}\;  \dlw$ as a disjoint union, and consider the mapping
	$$  g: \begin{array}{l}    \left(\wuw\right)^2  \longrightarrow \PP\left(\omega   \;  \dot{\cup} \;  \dlw \right) \\   (x,y) \longmapsto x \cup \{\chi_y \till n \; \mid \; n < \omega \}
	\end{array} $$ where $\chi_y$ is the characteristic function of $y$. It is not hard to see that $g$ is a continuous function (in the sense of the space $\PP(\omega \; \dot{\cup}\;  \dlw))$. 
	
	\p By $\PI^1_1$-uniformization, there exists a $\PI^1_1$ set $F \subseteq F_0$ which is the graph of a function, i.e., $\forall x \in \I_0 \; \exists ! y \; ((x,y) \in F)$. We let $\I := g[F]$ and claim that $\I$ is a $\PI^1_1$ m.i.f.
	
	\p To see that $\I$ is $\PI^1_1$, note that for $z \in  [ \omega   \;  \dot{\cup} \;  \dlw ]^\omega$, there is an explicit way to recover $x$ and $y$ such that $g(x,y) = z$, if such $x$ and $y$ exist. More precisely:   $z \in \I$ if and only if 
\begin{enumerate}
\item $z \cap \dlw$ is a single branch, \\ i.e., $\forall n \; \exists ! s \in z \cap 2^n$ and $\forall s, t \in z \cap \dlw \; (|s| < |t| \;\to\; s \subset t )$, 
\item $\forall y \; (\forall n \; (y \till n \in z \cap \dlw) \; \to \; (z \cap \omega, y) \in F)$. \end{enumerate}
This gives a $\PI^1_1$ definition of $\mathcal{I}$.\footnote{Following the suggestion of the anonymous referee, an alternative argument is as follows: Extend $g$ to $g'$ defined on the compact set $\PP(\omega)^2$ in such a way that $g'$ is injective. Then $g'$ is a  homeomorphism between $\PP(\omega)^2$ and the compact set $g'[\PP(\omega)^2]$, thus $\I = g'[F] = g[F] $ is a $\PI^1_1$ subset of the compact set $g'[\PP(\omega)^2]$.}

	\p To see that $\I$ is independent, suppose we have $z_1, \dots , z_n$ and $w_1, \dots , w_\ell \in \I$, the $z$'s being different from the $w$'s. Write $a_i := z_i \cap \omega$ and $b_j := w_j \cap \omega$. Then all $a_i$ and $b_j$ are in $\dom(F) = \I_0$, and moreover, since $F$ is a function, the $a_i$'s are different from the $b_j$'s. But then we have that $\sigma(z_1, \dots, z_n; w_1, \dots, w_\ell) \supseteq \sigma(a_1, \dots, a_n; b_1, \dots, b_\ell)$ is infinite, since the latter set is infinite by the independence of $\I_0$.
	
	\p To show maximality of $\I$, suppose $W \in [ \omega   \;  \dot{\cup} \;  \dlw  ]^\omega$ and $W \notin \I$. Let $A := W \cap \: \omega$. By maximality of $\I_0$, there are $a_1, \dots, a_n \in \I_0$ and different $b_1, \dots, b_\ell \in \I_0$ such that $\sigma(a_1, \dots, a_n, A ; \; b_1, \dots, b_\ell)$ is finite or $\sigma(a_1, \dots, a_n; \; b_1, \dots, b_\ell,A)$ is finite, w.l.o.g. the former. Then there are  $z_1, \dots, z_n$ and different $w_1, \dots, w_\ell$ such that $a_i = z_i \cap \omega$ and $b_j = w_j \cap \omega$. To make sure that the ``$\dlw$-part'' of the $z_i$'s and the $w_j$'s does not make the intersection infinite, we pick two additional $t_0 \neq t_1 \in \I$, different from the $z_i$'s and the $w_j$'s. Let $t_0 = g(x_0, y_0)$ and $t_1 = g(x_1, y_1)$. If $y_0 = y_1$, then $(t_0 \setminus t_1) \cap \dlw = \varnothing$, hence $\sigma(x_1, \dots, x_n, W, t_0 ; \; w_1, \dots, w_\ell, t_1)$ is finite. If, on the other hand, $y_0 \neq y_1$, then the sets $\{\chi_{y_0} \till n \; \mid \; n < \omega\}$ and $\{\chi_{y_1} \till n \; \mid \; n < \omega\}$ are almost disjoint,  so $(t_0 \cap t_1) \cap \dlw$ is finite. In that case, $\sigma(x_1, \dots, x_n, W, t_0, t_1 ; \; w_1, \dots , w_\ell)$ is finite. So in any case, $\I \cap \{W\}$ is not independent, completing the proof.		\end{proof}

Clearly the above proof also holds pointwise for every parameter, i.e., if there is a $\Sigma^1_2(a)$ m.i.f. then there is a $\Pi^1_1(a)$ m.i.f. However, since $\PI^1_1$-uniformisation is essential, the following natural question remains open:

\begin{question} Does the existence of a $\SIGMA^1_n$ m.i.f. imply the existence of a $\PI^1_{n-1}$ m.i.f., for $n > 2$? \end{question}


\section{Projective M.I.F.'s} \label{SectionProjective}

The main result of this section is:

\begin{theorem} \label{d} In the Cohen model $W$ (that is, the model obtained by adding at least $\omega_1$ Cohen reals over any
model of set theory) there are no projective m.i.f.'s. Furthermore, in $L(\IR)^W$, there are no m.i.f.'s. \end{theorem}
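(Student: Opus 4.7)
The plan is to exploit the fact that, in the Cohen model $W=V[G]$ with $G$ generic for $\mathrm{Fn}(\omega_1,2)$ over an arbitrary ground model $V$, every real in $W$ is added by a countable subset of coordinates, while uncountably many Cohen reals remain generic over each intermediate extension $V[G\till s]$ with $s\subseteq\omega_1$ countable. Suppose towards contradiction that $\I\in W$ is a projective m.i.f.\ defined by a formula $\phi(x,p)$ with $p\in W$.

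First I would absorb the parameter: by ccc there is a countable $s_0\subseteq\omega_1$ with $p\in V[G\till s_0]$, and after relabelling I may assume $p\in V$, so that $W$ remains a Cohen extension of $V$ by $\omega_1$ further coordinates. I would then fix a Cohen real $c\in W$ over $V$ (replacing $c$ by $c\triangle A$ for a suitable $A\in V$ if necessary, to arrange $c\notin\I$) and use $c$ to refute maximality. By maximality of $\I$ there are $a_1,\dots,a_n$ and distinct $b_1,\dots,b_\ell\in\I$ with $\sigma(\bar a;\bar b)\subseteq^* c$ or $\sigma(\bar a;\bar b)\cap c=^*\varnothing$. The aim is now to reflect the existence of such witnesses down to a submodel $V'\subseteq W$ containing $\bar a,\bar b$ but over which $c$ is still Cohen-generic. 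Granted that, $S:=\sigma(\bar a;\bar b)\in V'$ is an infinite subset of $\omega$ either almost contained in $c$ or almost disjoint from $c$, contradicting the fact that a Cohen real over $V'$ splits every infinite set of $V'$.

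The main obstacle is this reflection: the witnesses $\bar a,\bar b$ live in $W$ and may depend on the very coordinates of $G$ used to form $c$, so the projective definability of $\I$ is essential for controlling them. For $\SIGMA^1_2$-definable $\I$, Shoenfield absoluteness together with the $\omega_1$-sized pool of unused coordinates lets me pick $c$ in a coordinate disjoint from any countable support containing $\bar a,\bar b$. For higher projective levels, I would argue by induction on complexity, exploiting the homogeneity of $\mathrm{Fn}(\omega_1,2)$ together with the observation that the statement ``$\exists\bar a,\bar b\in\I$ with the bad relation to $c$'' is itself a projective statement about $(p,c)$, which can be reflected to a smaller extension where the splitting contradiction is available.

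For the $L(\IR)^W$ version, every set of reals in $L(\IR)^W$ is definable in $L(\IR)^W$ from a real and ordinals, so $\I$ has such a code. I would use a Vop\v{e}nka/Solovay-style analysis of $L(\IR)^W$ inside the Cohen extension to present $\I$ as coded in a ccc forcing extension of $V$ living inside $W$, after which the same splitting-by-Cohen argument applies. Making this last reflection sufficiently precise---so that the extracted witnesses $\bar a,\bar b$ land in a submodel over which an actual Cohen real of $W$ is still generic---is the technical heart of the argument.
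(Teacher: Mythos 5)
Your overall strategy---fix a single Cohen real $c$, extract maximality witnesses $\bar a,\bar b$ for $c$, reflect them into a submodel $V'$ over which $c$ is still Cohen, and then contradict the fact that $c$ splits every infinite set of $V'$---has a genuine gap at exactly the step you yourself call the technical heart, and the mechanisms you sketch cannot repair it. The witnesses $\bar a,\bar b$ are produced by applying maximality of $\I$ to $c$, so they are functions of $c$; hence ``picking $c$ in a coordinate disjoint from any countable support containing $\bar a,\bar b$'' is circular: you must choose $c$ before you know the witnesses, and nothing prevents their supports from containing the coordinate of $c$ (indeed, nothing prevents the witnessing members of $\I$ from being sets constructed from $c$ itself). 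Shoenfield absoluteness does not help either: for a $\SIGMA^1_2$ family it pulls the witnesses down into $V[c]$, but that model \emph{contains} $c$, so $c$ is certainly not Cohen over it and the splitting contradiction is unavailable there; to land the witnesses in a model omitting $c$'s coordinate you would need precisely the independence of the witnesses from $c$ that you cannot assume. For higher projective levels the situation is worse: projective statements are not absolute between the intermediate extensions $V[G\till s]$ and $W$ at all (the paper stresses that even $\SIGMA^1_2(\IC)$ fails in $W$), and homogeneity of the product only shows that statements about $c$ with $V$-parameters are decided by the trivial condition of the tail forcing---it does not localize existential witnesses to a submodel avoiding $c$.

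The paper resolves the witness-dependence problem by a different device, which is the idea missing from your proposal. Homogeneity of $\IC_\kappa$ is used to show, via Lemma~\ref{e}, that any set definable in $W$ from $V$-parameters which contains one Cohen real over $V$ inside a basic open set $[s]$ in fact contains a whole perfect almost disjoint tree $T$ (and likewise a perfect almost covering tree) of reals Cohen over $V$: every branch $x\in[T]$ satisfies the same forced statement as $c$, because the quotient of $\IC_\kappa$ over $V[x]$ is again $\IC_\kappa$. Then Lemma~\ref{b} handles the witnesses combinatorially rather than by localization: for the uncountably many branches $X\in[T]$ one has tuples $\bar a^X,\bar b^X$ from $\I$ with $\sigma(\bar a^X;\bar b^X)\subseteq^* X$; a $\Delta$-system argument produces two branches $X\neq Y$ whose tuples have a common root with consistent roles, so that $\sigma(\bar a^X\cup\bar a^Y;\bar b^X\cup\bar b^Y)$ is a legitimate Boolean combination of members of $\I$, hence infinite by independence, and yet it is $\subseteq^* X\cap Y$, which is finite because $X,Y$ are distinct branches of an almost disjoint tree. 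This contradiction goes through no matter where in $W$ the witnesses live, which is exactly what your reflection step was supposed to achieve and cannot; the $L(\IR)$ case then requires nothing beyond absorbing the real parameter into $V$, whereas your Vop\v{e}nka-style analysis would inherit the same unproved localization step.
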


The theorem is proved in three steps. First, we isolate a new regularity property which was implicit in Miller's original proof (\cite[proof of 10.28]{MillerPI11}).

\newcommand{\Sadac}{{\mathbb{S}_{ad\text{-}ac}}}
\newcommand{\Sacad}{{\mathbb{S}_{ac\text{-}ad}}}

\begin{definition} A tree $T \subseteq \dlw$ is called \emph{perfect almost disjoint} (\emph{perfect a.d.}) if it is a perfect tree and $\forall x \neq y \in [T]$ the set $\{n \mid x(n) = y(n)=1\}$ is finite. A tree $S \subseteq \dlw$ is called \emph{perfect almost covering}  (\emph{perfect a.c.})  if it is a perfect tree and $\forall x \neq y \in [T]$, the set $ \{n \mid x(n) = y(n)=0\}$ is finite. \end{definition}

\begin{definition} A set $X \subseteq \dw$ satisfies the \emph{perfect-a.d.-a.c.\ property}, abbreviated by $\Sadac$, if there exists a perfect a.d.\ tree $T$ with $[T] \subseteq X$, or there exists a perfect a.c.\ tree $S$ with $[S] \cap X = \varnothing$. \end{definition}

\begin{remark} Note that one could also  define the symmetric property:   $X \subseteq \dw$ satisfies the \emph{perfect-a.c.-a.d.\ property} if there exists a perfect a.c.\ tree $S$ with $[S] \subseteq X$, or there exists a perfect a.d.\ tree $T$ with $[T] \cap X = \varnothing$. A   curious aspect of our proof is that either of these two properties yields the proof in an analogous fashion, but since one of them is sufficient  we pick the former. \end{remark}

\begin{lemma} \label{b} Let $\Gamma$ be a pointclass closed under existential quantification over reals. Then $\Gamma (\Sadac)$ implies that there is no m.i.f. in $\Gamma$.
In particular, $\SIGMA^1_n(\Sadac) \Rightarrow \nexists \SIGMA^1_n\text{-m.i.f.}$ \end{lemma}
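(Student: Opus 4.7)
My plan is to apply the $\Sadac$-dichotomy to the $\Gamma$-definable set $\I \cup E_s$, where
$$E_s := \{X \in \dw : \exists \bar{a}, \bar{b} \in [\I]^{<\omega},\ \sigma(\bar{a}; \bar{b}) \setminus X \text{ is finite}\}$$
lies in $\Gamma$ by closure under existential real quantification. Note that $\I \cap E_s = \varnothing$: for $X \in \I$ and $\bar{a}, \bar{b} \in \I \setminus \{X\}$, $\sigma(\bar{a}; \bar{b}) \setminus X = \sigma(\bar{a}; \bar{b}, X)$ is infinite by the independence of $\I$. Together with the dual set $E_c := \{X : \exists \bar{a}, \bar{b} \in [\I]^{<\omega},\ \sigma(\bar{a}; \bar{b}) \cap X \text{ finite}\} \in \Gamma$, maximality gives $\wuw \subseteq \I \cup E_s \cup E_c$.

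Assume toward contradiction that $\I$ is a $\Gamma$-m.i.f. Applying $\Sadac$ to $\I \cup E_s$ yields two cases. \emph{Case (a):} a perfect a.d.\ tree $T$ with $[T] \subseteq \I \cup E_s$. At most one branch $x \in [T]$ has $A_x \in \I$: two distinct such would make $A_{x_0} \cap A_{x_1}$ finite by a.d., violating independence applied to $\sigma((A_{x_0}, A_{x_1}); \varnothing)$. The remaining $2^{\aleph_0}$ many branches have subset witnesses, and the a.d.\ property forces a \emph{uniqueness} on these: two distinct branches sharing $C = \sigma(\bar{a}; \bar{b})$ as subset witness would give $C \subseteq^* A_{x_0} \cap A_{x_1}$, finite by a.d., forcing $C$ itself finite and contradicting independence. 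Hence $|[T] \cap \wuw| \leq 1 + |[\I]^{<\omega}|$, which contradicts $|[T]| = 2^{\aleph_0}$ provided $|\I| < 2^{\aleph_0}$.

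\emph{Case (b):} a perfect a.c.\ tree $S$ with $[S] \cap (\I \cup E_s) = \varnothing$. Each $x \in [S]$ with $A_x \in \wuw$ coinfinite has $A_x \notin \I$ and admits no subset witness, so maximality supplies a complement witness. The a.c.\ property imposes the dual uniqueness---two distinct branches sharing $C$ as complement witness would force $C \subseteq^* \overline{A_{x_0}} \cap \overline{A_{x_1}}$ finite, hence $C$ finite---yielding $|[S] \cap \wuw| \leq |[\I]^{<\omega}|$, again contradicting $|[S]| = 2^{\aleph_0}$ whenever $|\I| < 2^{\aleph_0}$.

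The main obstacle is the case $|\I| = 2^{\aleph_0}$, in which the uniqueness bounds $|[\text{tree}]| \leq |[\I]^{<\omega}| = |\I|$ match the tree cardinality and do not immediately give a contradiction. This is to be overcome by iterating the $\Sadac$-dichotomy on further refined $\Gamma$-definable subsets and by applying $\Sadac$ also to $\I^c \in \Gamma$ (available since Lemma~1.2 gives $\I \in \Gamma \cap \check{\Gamma}$), eventually saturating the complementary uniqueness constraints into contradiction.
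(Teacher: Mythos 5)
Your setup (the sets $E_s$, $E_c$, the observation that distinct branches of an a.d.\ tree cannot share a subset witness, and the dual statement for a.c.\ trees) is correct, but the proof has a genuine gap exactly where you flag it, and that gap is the whole difficulty. Your argument is a counting argument: branches of the tree inject into boolean combinations $\sigma(\bar a;\bar b)$, of which there are only $|\I|$ many, so a contradiction appears only under the additional hypothesis $|\I| < \cont$. That hypothesis cannot be assumed and cannot be secured: nothing in $\Gamma(\Sadac)$ bounds the size of a hypothetical $\Gamma$ m.i.f., and since $\dd \leq \ii$, in any model of $\dd=\cont$ (in particular under CH, and in the Cohen model where the lemma is actually applied) \emph{every} m.i.f.\ has cardinality $\cont$ --- so the case you handle is, consistently, empty, while the case you leave open is the one that matters. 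The closing paragraph is not an argument: no ``refined $\Gamma$-definable subsets'' are exhibited, no iteration is defined, and applying $\Sadac$ to $\I^c$ does not help --- its second horn (a perfect a.c.\ tree inside $\I$) contradicts independence outright, but its first horn (a perfect a.d.\ tree of non-members of $\I$) carries essentially no information, so there is no visible route from ``saturating uniqueness constraints'' to a contradiction.

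The cardinality-free idea you are missing (and which the paper uses) is to replace counting by the $\Delta$-system lemma. Given the perfect a.d.\ tree $T$ with every branch in $H$ (your $E_s$), assign to each branch $X \in [T]$ the finite set $\{a^X_1,\dots,a^X_{n_X},b^X_1,\dots,b^X_{\ell_X}\}$ of witnesses, extract from uncountably many branches an uncountable $\Delta$-system with root $R$, and refine once more so that each element of $R$ plays the same role (``$a$'' versus ``$b$'') for every branch in the family. For any two distinct branches $X,Y$ of the refined family, any common element of the two witness sets lies in $R$ and hence has the same role in both, so $\bar a^X \cup \bar a^Y$ is disjoint from $\bar b^X \cup \bar b^Y$; therefore $\sigma(\bar a^X \cup \bar a^Y;\, \bar b^X \cup \bar b^Y)$ is a legitimate boolean combination, and it is $\subseteq^* X \cap Y$, which is finite by almost disjointness --- contradicting independence of $\I$ with no assumption whatsoever on $|\I|$. (The a.c.\ case is symmetric, using that $X \cup Y =^* \omega$ forces $\sigma(\bar a^X \cup \bar a^Y;\, \bar b^X \cup \bar b^Y) =^* \varnothing$.) Note the contrast with your uniqueness observations: it is not enough that two branches never share a single witness; one needs two branches whose witnesses can be \emph{merged} into one boolean combination, and producing such a pair is precisely what the $\Delta$-system extraction accomplishes.
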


\begin{proof}   
Assume $\I$ is a $\Gamma$ m.i.f. Define 
$$ \hskip-0.75cm H := \{X \mid \exists \bar{a}, \bar{b}  \subset \I \;  \text{ disjoint s.t. }  \sigma(\bar{a}; \bar{b}) \subseteq^* X\}$$
$$ K := \{X \mid \exists \bar{a}, \bar{b}  \subset \I   \;  \text{ disjoint s.t. }     \sigma(\bar{a}; \bar{b}) \cap X =^* \varnothing\}. $$ 
The maximality of $\I$ implies that $\wuw = H \cup K$. Moreover, $H$ is a $\Gamma$ set ($K$ also is, but this turns out to be irrelevant). 

\p From  $\Gamma(\Sadac)$ we then obtain a perfect almost disjoint tree $T$ with $[T] \subseteq H$, or a perfect almost covering tree $S$ with $[S] \cap H = \varnothing$, hence $[S] \subseteq K$. Assume the former.

\p For each $X \in [T]$ let $a^X_1, \dots, a^X_{n_X}$ and $b^X_1, \dots, b^X_{\ell_X}$ witness  the fact that $X \in H$. Applying the  $\Delta$-systems lemma to the family $\{\{a^X_1, \dots, a^X_{n_X}, b^X_1, \dots, b^X_{\ell_X}\} \; \mid \; X \in [T]\}$, find an uncountable subset of $[T]$ with a fixed root $R$. Moreover, an uncountable sub-family of this family has the property that the elements of the root $R$ have the same function in the sense of ``being an $a^X_i$'' or ``being a $b^X_j$''. It follows that, for distinct $X,Y$ from this family, we have: 
$$\{a^X_1, \dots, a^X_{n_X}, a^Y_1, \dots, a^Y_{n_Y}\} \; \cap  \; \{b^X_1, \dots, b^X_{\ell_X}, b^Y_1, \dots, b^Y_{\ell_Y}\} = \varnothing.$$ 
But then the boolean combination  $\sigma(\bar{a}^X \cup \bar{a}^Y ;  \bar{b}^X \cup \bar{b}^Y) \subseteq^* X \cap Y =^* \varnothing$, contradicting the independence of $\I$.
\end{proof}

It is well-known that Cohen forcing adds perfect almost disjoint and perfect almost covering trees of Cohen reals
(see~\cite[Lemma 10.29]{MillerPI11} or~\cite[proof of Lemma 3.6.23]{BaJu95}). We include a proof for the sake of completeness.

 \begin{lemma} \label{e} Let $[s]$ be a basic open set and $c \in [s] $ a Cohen real over the ground model $V$. Then in $V[c]$ there exists a perfect almost disjoint set and a perfect almost covering set of Cohen reals over $V$, contained inside $[s]$. \end{lemma}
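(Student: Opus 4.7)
The plan is to establish the perfect almost disjoint case; the perfect almost covering case then follows by applying the pointwise complement $x \mapsto 1 - x$, which is a ground-model definable homeomorphism of $\dw$ sending perfect a.d.\ trees to perfect a.c.\ trees and preserving Cohen-genericity over $V$. We may assume $s = \varnothing$ without loss of generality; the general case is analogous by carrying out the whole construction inside $[s]$.

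Working in $V[c]$, we aim to construct a continuous injection $F : \dw \to \dw$ such that (i) each $F(y)$ is Cohen over $V$, and (ii) for $y \neq y'$, the 1-supports of $F(y)$ and $F(y')$ are almost disjoint. The tree $T$ of initial segments of the image $F[\dw]$ will then be a perfect almost disjoint tree of $V$-Cohen reals, as required.

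The construction exploits the well-known decomposition of Cohen forcing as its $\dlw$-indexed product, under which the single Cohen real $c$ codes a family $\langle c_\sigma : \sigma \in \dlw \rangle$ of mutually Cohen-generic reals over $V$. For each $y \in \dw$, the real $F(y)$ is assembled block-by-block from the reals $\langle c_{y \restriction n} : n < \omega\rangle$ along the branch $y$, arranged so that the ``1-contributions'' of distinct branches land in disjoint portions of $\omega$ beyond their first point of difference; this secures condition (ii).

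The main obstacle is verifying condition (i): each $F(y)$, even for $y \in V[c]\setminus V$, must be Cohen over $V$. The verification is a standard forcing-density argument: given a $V$-dense open $D \subseteq \dlw$ and a name $\dot y$ for an element of $\dw$, one extends any Cohen condition on $c$ both to decide a sufficiently long initial segment of $\dot y$ and to drive the corresponding initial segment of $F(\dot y)$ into $D$, using the mutual genericity of the $c_\sigma$'s to absorb the dependence on $\dot y$. The full construction and verification are given in~\cite[Lemma~10.29]{MillerPI11} and~\cite[proof of Lemma~3.6.23]{BaJu95}, which we follow.
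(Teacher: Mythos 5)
Your reduction of the almost covering case to the almost disjoint case via $x \mapsto 1-x$ is fine (modulo the small point that complementation moves $[s]$ to $[\bar{s}]$, so one should apply the a.d.\ case to the Cohen real $1-c \in [\bar{s}]$ and then complement), and presenting the tree as the set of initial segments of the range of a continuous injection is a reasonable frame. The gap is in the core construction: with a ground-model arrangement of blocks, your requirements (i) and (ii) are incompatible. Read in the natural way, ``the 1-contributions of distinct branches land in disjoint portions of $\omega$'' means there is a $V$-coded partition $\{Q_\sigma : \sigma \in \dlw\}$ of $\omega$ such that, apart from a finite initial part, the 1's of $F(y)$ lie in $\bigcup_m Q_{y \upharpoonright m}$. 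But then \emph{no} $F(y)$ is Cohen over $V$: for each $k$, the set $D_k$ of strings having 1's above $k$ both in $Q_\sigma$ and in $Q_\tau$ for some incomparable pair $\sigma \perp \tau$ is a dense open set coded in $V$ (density holds because the portions along at least two incomparable chains must be unbounded if the branches are to be infinite sets), while every initial segment of $F(y)$ avoids $D_k$ once $k$ is past its exceptional part, since its 1's lie only in portions indexed by the pairwise comparable nodes $y \upharpoonright m$. If instead you place the blocks of all branches at the \emph{same} fixed positions (with contents read off different $c_\sigma$'s), then (ii) fails: mutual genericity forces two branches to share a 1 in infinitely many blocks. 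So no density argument over names $\dot y$ can close condition (i) --- for $D_k$ the required extension simply does not exist --- and the citations do not supply the missing step: Miller's Lemma 10.29 and Bartoszy\'nski--Judah's Lemma 3.6.23 prove the statement essentially the way the paper does, not by your construction.

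The paper's proof shows what is really needed: the arrangement itself must be generic. The paper forces with the countable poset $\IP$ of finite trees $T \subseteq 2^{\leq k_\ell}$ admitting a decomposition $0 = k_0 < k_1 < \cdots < k_\ell$ such that on each interval $[k_i, k_{i+1})$ at most one node of $T$ is not constantly $0$; both the intervals and the choice of the single ``active'' node per interval are made by the generic, so no dense set coded in $V$ can anticipate where the 1's of a branch may sit, and the obstruction above disappears. Note also that the verification there is run level-by-level: each terminal node is steered into a given dense $D \in V$ on its own fresh interval while all other nodes are extended by 0's, yielding the statement ``for every dense $D \in V$ there is a level of $T_G$ all of whose nodes have an initial segment in $D$.'' This statement mentions only the tree, not its branches, so it covers branches of $[T_G]$ that are not in $V$ (your ``main obstacle'') without any name-chasing, and it is exactly what makes ``being a perfect set of Cohen reals'' upward absolute, which the paper uses again in the proof of Theorem \ref{d}. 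Your proposed verification, besides the obstruction above, also contains a circularity it never resolves: extending a condition to decide more of $\dot y$ commits precisely coordinates of the reals $c_{\dot y \upharpoonright n}$ from which $F(\dot y)$ is to be assembled. If you repair the construction by letting the generic choose the portions, you will find you have rebuilt the paper's poset.
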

 
\begin{proof} For simplicity assume that $[s] = \dw$, and we only prove the case with almost disjoint trees since the other case is similar. Let $\IP$ denote the partial order consisting of finite trees $T \subseteq \dlw$ with the  property that  there are  $0= k_0 < k_1 < \dots < k_\ell$ such that $T \subseteq 2^{\leq k_\ell}$ and for every $i < \ell$, there is \emph{at most one} $t \in T$ where $t \till [k_i, k_{i+1})$ is not constantly 0. The trees are ordered by end-extension.
	
	\p If $G$ is $\IP$-generic, let $T_G$ denote the naturally defined limit of the trees in $G$. By a standard genericity argument, $[T_G]$ must be perfect.  Given any two branches $x, y \in [T_G]$, the construction ensures that for all $n$ after the point where $x$ and $y$ split, either $x(n) = 0$ or $y(n) = 0$, therefore  $x$ and $y$ are almost disjoint. To show that every $x \in [T_G]$ is Cohen over $V$, let $D$ be Cohen-dense and $T \in \IP$ fixed. Enumerate all terminal nodes of $T \subseteq 2^{\leq k_\ell}$ by $\{t_1, \dots, t_j\}$. Extend $T$ to $T' \subseteq 2^{\leq k_{\ell + j}}$ such that each terminal node $t_i \in 2^{k_\ell}$ gets extended to $t_i' \in 2^{k_{\ell + j}}$ such that $t_i '$ is constantly zero on all intervals except $[k_{\ell + i -1 }, k_{\ell + i})$ and $t_i ' \till k_{\ell + i} \in D$. Thus every branch of $T_G$ must meet $D$.
	
\p Since $\IP$ is countable, it is isomorphic to Cohen forcing. Therefore, if $V[c]$ is a Cohen extension of $V$, it is also a $\IP$-generic extension of $V$, so there exists a perfect almost disjoint set $[T_G]$ of Cohen reals. \end{proof}

\begin{corollary}  \label{cora}
Let $[s]$ be basic open and assume $A$ is comeager in $[s]$. Then $A \cap [s]$ contains a perfect a.d set and a perfect a.c. set.
\end{corollary}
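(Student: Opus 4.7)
My plan is to combine Lemma \ref{e} with the standard correspondence between comeager sets and Cohen generics over countable elementary submodels. Since $A$ is comeager in $[s]$, I can write $\bigcap_{n<\omega} U_n \subseteq A \cap [s]$ for some sequence of dense open subsets $U_n$ of $[s]$. I would then fix a countable elementary submodel $M \preceq H_\theta$ (for $\theta$ sufficiently large) containing $s$ and the sequence $\langle U_n \mid n < \omega\rangle$, so that any real in $[s]$ that is Cohen over $M$ automatically lies in every $U_n$, and therefore in $A \cap [s]$.

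Next, I would apply Lemma \ref{e} with $M$ in place of the ground model $V$. The forcing $\IP$ defined there is countable, hence isomorphic to Cohen forcing, and $M$ has only countably many dense subsets of $\IP$, so a $\IP$-generic filter $G$ over $M$ exists in the real universe $V$. The tree $T_G \in M[G] \subseteq V$ then satisfies $[T_G] \subseteq [s]$, is perfect and almost disjoint, and every branch $x \in [T_G]$ is Cohen over $M$ (this is precisely what the genericity argument in the proof of Lemma \ref{e} gives). Consequently each such $x$ meets every $U_n$, so $[T_G] \subseteq A \cap [s]$, giving the required perfect a.d.\ subset. The case of a perfect a.c.\ set is completely analogous, using the symmetric variant of $\IP$ from Lemma \ref{e}.

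The only subtle point — and the part I would state most carefully — is the verification that the branches of $T_G$ really are Cohen over $M$ in the sense of the usual Cohen forcing on $\dw$, since we need them to meet exactly the dense open sets coded in $M$, not merely the dense subsets of $\IP$. This is what the last paragraph of the proof of Lemma \ref{e} shows (the extension procedure forcing each branch into a prescribed Cohen-dense set $D \in M$), so I would simply invoke it. Everything else is routine, and the argument proceeds entirely inside $V$ with no need to pass to a generic extension of $V$ itself.
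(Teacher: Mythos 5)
Your proof is correct and follows essentially the same route as the paper: take a countable (elementary sub)model $M$ containing the relevant data, use countability to find the $\IP$-generic (equivalently, Cohen-generic) filter over $M$ inside $V$, and observe that the branches of $T_G$, being Cohen over $M$, land in the comeager set $A\cap[s]$. Your version just spells out the details that the paper's two-line proof leaves implicit.
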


\begin{proof} Let $M$ be a countable model containing $A$. Apply the previous lemma in $M[c]$ and note that the perfect a.d. and perfect a.c. sets of Cohen reals must
be contained in $A \cap [s]$ by comeagerness.
\end{proof}

\begin{corollary}  \label{corb}
Let $\Gamma$ be a pointclass closed under existential quantification over reals. Then $\Gamma (\IC)$ implies that there is no m.i.f. in $\Gamma$.
In particular,  $\SIGMA^1_n(\IC) \Rightarrow \nexists \SIGMA^1_n\text{-m.i.f.}$
\end{corollary}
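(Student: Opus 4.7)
The plan is to reduce Corollary \ref{corb} to Lemma \ref{b} by showing that $\Gamma(\IC)$ implies $\Gamma(\Sadac)$. Once this implication is established, Lemma \ref{b} immediately gives the non-existence of a $\Gamma$ m.i.f., and specialising $\Gamma$ to $\SIGMA^1_n$ yields the displayed particular case.

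To prove $\Gamma(\IC) \Rightarrow \Gamma(\Sadac)$, let $A$ be any set in $\Gamma$. By the Baire property, write $A = U \triangle M$ where $U \subseteq \dw$ is open and $M$ is meager. I would split into two cases. First, if $U$ is nonempty, choose a basic open set $[s] \subseteq U$; then $A \cap [s]$ is comeager in $[s]$, so by Corollary \ref{cora} there is a perfect a.d. tree $T$ with $[T] \subseteq A \cap [s] \subseteq A$, witnessing the first disjunct in the definition of $\Sadac$ for $A$. Second, if $U = \varnothing$, then $A$ itself is meager, hence its complement $\dw \setminus A$ is comeager (in $\dw = [\varnothing]$); applying Corollary \ref{cora} to the complement produces a perfect a.c. tree $S$ with $[S] \subseteq \dw \setminus A$, i.e., $[S] \cap A = \varnothing$, witnessing the second disjunct. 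Either way, $A$ satisfies $\Sadac$, so $\Gamma(\IC)$ implies $\Gamma(\Sadac)$, and Lemma \ref{b} completes the argument.

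There is no serious obstacle here; the only point worth checking is that Corollary \ref{cora} is being applied in the right direction in each of the two cases (to $A$ itself when $A$ is locally comeager, and to $A^c$ when $A$ is meager), and that the a.d.\ case and the a.c.\ case of Corollary \ref{cora} are lined up with the two disjuncts of $\Sadac$ as stated. Since $\Gamma(\IC) \Rightarrow \Gamma(\Sadac)$ holds unconditionally on $\Gamma$ (we only needed the Baire property of $A$ to perform the dichotomy), the conclusion then follows uniformly, giving both the general form and the particular $\SIGMA^1_n$ instance.
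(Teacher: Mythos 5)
Your proposal is correct and is exactly the argument the paper intends: the paper's proof of Corollary~\ref{corb} is the one-liner ``Immediate using Lemma~\ref{b} and Corollary~\ref{cora},'' and the details you supply --- using the Baire property to split into the locally comeager case (perfect a.d.\ tree inside $A$ via Corollary~\ref{cora}) and the meager case (perfect a.c.\ tree in the complement) to establish $\Gamma(\IC)\Rightarrow\Gamma(\Sadac)$ --- are precisely what that one-liner leaves to the reader. The two cases are lined up correctly with the two disjuncts of $\Sadac$, so nothing is missing.
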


\begin{proof}
Immediate using Lemma~\ref{b} and Corollary~\ref{cora}.
\end{proof}

Using Lemmata~\ref{b} and~\ref{e}, we can complete the proof of the theorem.

\begin{proof}[Proof of Theorem \ref{d}]
Let $W := V^{\IC_{\kappa}}$ (for any $\kappa > \omega$), and let $A$ be a  set in $W$ defined by a formula $\Phi(x)$ with real or ordinal parameters, w.l.o.g. all of which are in $V$.  In $W$, let  $c$ be Cohen over $V$, and assume w.l.o.g. that $\Phi(c)$. Then $V[c] \models $ ``$p \Vdash_{{\IQ}} \Phi(\check{c})$'', where ${\IQ}$ is the remainder forcing leading from $V[c]$ to $W$ and $p$ is some ${\IQ}$-condition. However, since $\IC_{\kappa}$ is the product forcing, ${\IQ}$ is isomorphic to $\IC_{\kappa}$. Moreover, since $\IC_{\kappa}$ is homogeneous we can assume that $p$ is the trivial condition, hence we really have:
$$V[c] \models \text{``}\Vdash_{{\IC}_{\kappa	}} \Phi(\check{c})\text{''}$$
Let $[s]$ be a Cohen condition with $c \in [s]$  forcing this statement in $V$. By Lemma \ref{e}, first we find a perfect a.d.\  tree $T$ with  $T \in V[c]$ , $[T] \subseteq [s]$  and such that all $x \in [T]$ are Cohen over $V$. Note that this fact remains true in $W$, since ``being a perfect set of Cohen reals'' is upwards absolute. Now, for any such $x \in [T]$ (in $W$), we have that $x \in [s]$, and therefore $V[x]$ satisfies whatever $[s]$ forces, in particular
$$V[x] \models \text{``}\Vdash_{{\IC}_{\kappa}} \Phi(\check{x})\text{''}$$
But, again,  the remainder forcing leading from $V[x]$ to $W$ is isomorphic to $\IC_{\kappa}$, and it follows that $W \models  \Phi(x)$.

\p  Similarly, we also find a perfect a.c.\ tree $S$ with exactly the same properties. Thus $A$ satisfies both $\Sadac$ and $\Sacad$, and the rest follows by Lemma \ref{b}. \end{proof}

\newcommand{\id}{{\rm id}}
\newcommand{\FFI}{{{\rm FF}(\I)}}


\section{$\PI^1_1$ m.i.f. in the Sacks model}

In contrast to the above, this section is devoted to the following result:

\begin{theorem} \label{mainsackstheorem} In the countable-support iteration of Sacks forcing, as well as the countable-support product of Sacks forcing, starting from $L$, there exists a $\PI^1_1$ m.i.f. \end{theorem}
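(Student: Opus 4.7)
The plan is to construct, inside $L$, a maximal independent family $\I$ that is $\SIGMA^1_2$-definable and indestructible by the countable-support iteration (and product) of Sacks forcing, and then invoke Theorem~\ref{equivalence} to upgrade it to a $\PI^1_1$ m.i.f.

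First, working entirely in $L$ and using the canonical $\SIGMA^1_2$-good well-order $<_L$ of the reals, I would carry out the Eisworth--Shelah construction of a Sacks-indestructible m.i.f.\ mentioned in the introduction (as refined in \cite{ShelahCon, FischerMontoya}) by recursion along $\omega_1^L$. At stage $\alpha$ one enumerates via $<_L$ the $\alpha$-th pair $(T_\alpha,\dot\tau_\alpha)$, where $T_\alpha$ is a perfect tree and $\dot\tau_\alpha$ is a $T_\alpha$-name for an element of $\wuw$, and picks the $<_L$-least real $a_\alpha$ together with a refinement $T'_\alpha \le T_\alpha$ satisfying: (i) $\I_\alpha \cup \{a_\alpha\}$ is still independent, where $\I_\alpha = \{a_\beta : \beta<\alpha\}$; and (ii) $T'_\alpha$ forces that $\dot\tau_\alpha$ fails to be independent over $\I_\alpha \cup \{a_\alpha\}$. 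The amalgamation lemma guaranteeing that such $a_\alpha, T'_\alpha$ exist at every stage is the combinatorial heart of the construction, and the use of $<_L$ makes the choice canonical.

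The resulting family admits a definition of the schematic form ``$x \in \I$ iff there exist a countable ordinal $\alpha$ and a code, in some countable level of $L$, of an initial segment $\langle a_\beta : \beta \le \alpha\rangle$ of the construction with $x = a_\alpha$'', which is readily checked to be $\SIGMA^1_2$. Since the countable-support iteration and product of Sacks forcing over $L$ both preserve $\omega_1^L$ and leave $<_L$ with its $\SIGMA^1_2$ definition, the same formula continues to define $\I$ in the Sacks model. Maximality is preserved there because every real $x$ in the Sacks extension is realized by a name which (by a L\"owenheim--Skolem / reflection argument) was enumerated at some countable stage $\alpha$, whence the refinement $T'_\alpha$ ensures that $x$ fails to be independent over $\I_\alpha \subseteq \I$. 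An application of Theorem~\ref{equivalence} then promotes $\I$ to a $\PI^1_1$ m.i.f.

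The main obstacle is the combinatorial content of step (ii) --- simultaneously maintaining independence of $\I_\alpha$ and diagonalizing against the listed Sacks-name --- together with the lift of the resulting single-step Sacks-indestructibility to the full CS iteration and product of length $\omega_2$. The former is supplied by the Eisworth--Shelah--Fischer--Montoya amalgamation cited above; the latter is handled by a standard preservation theorem for proper CS iterations of forcings with the Sacks property. The new ingredient here is the $\SIGMA^1_2$ verification itself, which is what makes Theorem~\ref{equivalence} applicable and delivers the $\PI^1_1$ m.i.f.
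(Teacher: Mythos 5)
Your overall architecture --- build a canonical, $<_L$-least-choice construction of a Sacks-indestructible m.i.f.\ in $L$, observe that the result is $\SIGMA^1_2$ via the ``countable well-founded model of $\ZFC^* + V{=}L$'' trick, and then apply Theorem~\ref{equivalence} --- is exactly the paper's, and that part of your write-up is sound. The gap is in the indestructibility argument itself. You diagonalize against pairs $(T_\alpha,\dot\tau_\alpha)$ where $\dot\tau_\alpha$ is a name for a \emph{single} Sacks forcing, and then pass to the length-$\omega_2$ countable-support iteration and product ``by a standard preservation theorem for proper CS iterations of forcings with the Sacks property.'' No such off-the-shelf theorem exists: preservation of maximality of an independent family is not a consequence of properness or of the Sacks property, and single-step indestructibility does not by itself survive the iteration. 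Reals appearing at later stages are read off names over intermediate models, not off ground-model single-Sacks names, so your $\omega_1$-length recursion in $L$ never sees them; the L\"owenheim--Skolem remark does not reduce them to the pairs you enumerated. The same objection applies, even more sharply, to the countable-support product. Moreover, the ``amalgamation lemma'' you attribute to Eisworth--Shelah and \cite{FischerMontoya} is not what those sources provide; their machinery has a different shape.

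What the paper actually does is replace name-diagonalization entirely by a structural criterion. Using the $\sigma$-closed poset $\IP$ of pairs $(\A,A)$ purely combinatorially under CH, it builds an \emph{indestructibility tower} $\{(\A_\alpha,A_\alpha)\mid\alpha<\omega_1\}$ whose union is a \emph{densely maximal} independent family whose dual density filter $\id^*(\A)$ is generated by a Ramsey filter together with the Fr\'echet filter. Theorem~\ref{ramseydense} (the Shelah/Fischer--Montoya preservation theorem) then yields indestructibility under both the CS iteration and the CS product in one stroke. This is the combinatorial content your proposal is missing: to repair it you would either have to prove the preservation theorem you invoke (which, for independent families, is precisely Theorem~\ref{ramseydense} and requires the dense maximality and Ramsey hypotheses your construction does not secure), or redo the recursion so that it produces an indestructibility tower. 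The $\SIGMA^1_2$ bookkeeping and the final appeal to Theorem~\ref{equivalence} can then be carried over unchanged.
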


As a consequence, we obtain $\Con(\exists  \PI^1_1$-m.i.f.\ of size $< \cont)$, and in fact even $\Con(\exists \PI^1_1$-m.i.f.\ $+$ $\mathfrak{i} < \cont)$. 
Another consequence is the consistency of $\exists \PI^1_1$-m.i.f.\ together with ``all $\SIGMA^1_2$ sets have the Marczewski-property'' (where $X \subseteq \dw$ has the \emph{Marczewski-property} if every perfect set $P$ contains a perfect subset $P'$ with $P' \subseteq X$ or $P' \cap X = \varnothing$), see \cite[Theorem 7.1]{BrLo99}.

The construction we use appeared implicitly in  \cite{ShelahCon} where, among other things, a forcing notion $\IP$ for generically adding a Sacks-indestructible m.i.f. was isolated. These ideas were elaborated and studied further in \cite{FischerMontoya}. Here we show that the combinatorics of this forcing can also be used to explicitly define a Sacks-indestructible m.i.f. in a model of CH, and that in $L$, such a Sacks-indestructible m.i.f. can be defined in a $\SIGMA^1_2$-fashion. We start by recalling some technical definitions from \cite{FischerMontoya}.

To reduce cumbersome notation, in this section the following will be useful:

\begin{notation} If $\I \subseteq \wuw$ then \begin{itemize}
\item $\FFI := \{h: \I \to 2 \; \mid \; |\dom(h)|<\omega\}$, and
\item For $h \in \FFI$ we write
$$\sigma(h) := \bigcap \{A \mid A \in \dom(h) \; \land \; h(A) = 1\} \cap \bigcap \{\omega \setminus A \; \mid \;   A \in \dom(h) \; \land \; h(A) = 0\}.$$
\end{itemize}
\end{notation} 

\newcommand{\E}{{\mathcal{E}}}
\newcommand{\cf}{{\rm cf}}

\begin{definition} An independent family $\I$ is called a \emph{densely maximal independent family} if for all $X \subseteq \omega$, for all $h \in \FFI$ there exists $h' \in \FFI$ with $h' \supseteq h$ such that $\sigma(h') \subseteq^* X$ or $\sigma(h') \cap X =^* \varnothing$. \end{definition}

\begin{definition} Let $\I$ be an independent family. The \emph{density ideal of $\I$} is 
$$\id(\I) := \{X \subseteq \omega \; \mid \;  \forall h \in \FFI \: \exists h' \in \FFI \; (h' \supseteq h \; \land \; \sigma(h') \cap X =^* \varnothing)\}.$$
The dual filter is denoted by  $\id^*(\I)$. \end{definition}

\begin{lemma} \label{ideallemma} If $\I \subseteq \I'$ then $\id(\I) \subseteq \id(\I')$, and if $\I = \bigcup_{\alpha<\kappa}\I_\alpha$ for a regular uncountable $\kappa$, 
where the $\I_\alpha$ form a continuous increasing chain with $|\I_\alpha| < \kappa$, then $\id(\I) = \bigcup_{\alpha<\kappa}\id(\I_\alpha)$ . \end{lemma}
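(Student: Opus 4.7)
The plan is to handle the two clauses separately, both by direct extension arguments; the only non-routine ingredient is a closure-point argument for the second inclusion in the chain statement.

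For the monotonicity clause, suppose $X \in \id(\I)$ and take an arbitrary $g \in {\rm FF}(\I')$. Restrict to $\I$ by setting $h := g \restriction \I \in \FFI$. Apply the hypothesis to obtain $h' \in \FFI$ with $h' \supseteq h$ and $\sigma(h') \cap X =^* \varnothing$. Since $\dom(h') \subseteq \I$ and $h'$ agrees with $g$ on their overlap $\dom(g) \cap \I = \dom(h)$, the union $g' := g \cup h'$ is a well-defined element of ${\rm FF}(\I')$ extending $g$. Because $g' \supseteq h'$, we have $\sigma(g') \subseteq \sigma(h')$, and therefore $\sigma(g') \cap X =^* \varnothing$, as required.

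For the chain statement, the inclusion $\bigcup_\alpha \id(\I_\alpha) \subseteq \id(\I)$ is immediate from the monotonicity just proved. For the reverse inclusion, suppose $X \in \id(\I)$. The strategy is a standard closure-point argument using the regularity of $\kappa$. Define $f : \kappa \to \kappa$ by letting $f(\alpha)$ be the least $\beta \geq \alpha$ such that every $h \in {\rm FF}(\I_\alpha)$ admits an extension $h' \in {\rm FF}(\I_\beta)$ with $\sigma(h') \cap X =^* \varnothing$. This is well-defined: for each individual $h$ the hypothesis $X \in \id(\I)$ yields some $h' \in \FFI$ with the desired property, and since $\dom(h')$ is finite we have $\dom(h') \subseteq \I_{\beta_h}$ for some $\beta_h < \kappa$; as $|{\rm FF}(\I_\alpha)| < \kappa$ by the cardinality hypothesis on $\I_\alpha$ and the regularity of $\kappa$, the supremum $\beta := \sup_h \beta_h$ is still below $\kappa$.

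Now pick any $\alpha_0 < \kappa$, iterate $\alpha_{n+1} := f(\alpha_n)$, and let $\alpha^* := \sup_n \alpha_n < \kappa$. Continuity of the chain gives $\I_{\alpha^*} = \bigcup_n \I_{\alpha_n}$. Given any $h \in {\rm FF}(\I_{\alpha^*})$, finiteness of $\dom(h)$ forces $h \in {\rm FF}(\I_{\alpha_n})$ for some $n$, so by definition of $f$ there is $h' \in {\rm FF}(\I_{\alpha_{n+1}}) \subseteq {\rm FF}(\I_{\alpha^*})$ extending $h$ with $\sigma(h') \cap X =^* \varnothing$. Hence $X \in \id(\I_{\alpha^*})$, finishing the proof. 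The only place where a mild obstacle appears is ensuring that the closure point $\alpha^*$ remains below $\kappa$ and that the witnesses at $\alpha^*$ already live in $\I_{\alpha^*}$; both are handled by the regularity of $\kappa$ together with the continuity of the chain.
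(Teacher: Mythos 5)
Your proof is correct and follows essentially the same route as the paper's (much terser) argument: monotonicity by extending a condition $g \in \FF(\I')$ through a witness obtained from its restriction to $\I$, and the chain clause by locating a closure point of the $h \mapsto h'$ operation below $\kappa$ via regularity and continuity. The paper states this in one line; your write-up simply supplies the details, and all of them check out.
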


\begin{proof} The first statement is straightforward, and for the second statement, if $X \in \id(\I)$ then we can let  $\alpha<\kappa$ be the least ordinal closed under the $h \mapsto h'$ operation given by the definition of $\id(\I)$. \end{proof}

Recall that a filter $\mathcal{F}$ on $\omega$ is  a \emph{p-filter} iff for every $\{X_n \mid n<\omega\} \subseteq \F$ there exists  $X \in \F$ with $X \subseteq^* X_n$ for all $n$ (``$X$ is a pseudointersection of the $X_n$'s''). A filter $\mathcal{F}$ on $\omega$ is a \emph{q-filter} if for every partition of $\omega$ into finite sets $\E = \{E_n \mid n<\omega\}$, there is $X \in  \mathcal{F}$ such that $|X \cap E_n| \leq 1$ for all $n$ (``$X$ is a semiselector for $\E$''). A filter $\mathcal{F}$ is a \emph{Ramsey filter} if it is both a p-filter and a q-filter (cf. \cite[Section 4.5.A]{BaJu95}). The main ingredient in our proof is the following result:

\begin{theorem}[{\cite{ShelahCon}, \cite[Corollary 37]{FischerMontoya}}] \label{ramseydense} Let $\I$ be a densely maximal independent family, such that the dual filter $\id^*(\I)$ is generated by a Ramsey filter and the filter of cofinite sets (Fr\'echet filter). Then $\I$ remains maximal after a countable-support iteration of Sacks forcing, as well as a countable-support product of Sacks forcing. \end{theorem}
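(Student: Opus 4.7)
The argument splits into a single-step preservation lemma for one copy of Sacks forcing $\IS$ and a lifting to the countable-support iteration and product. The lifting step is a standard application of Shelah's preservation framework for proper $\ww$-bounding forcings (\emph{Proper and Improper Forcing}, Chs.~V and XVIII): one inductively checks that each intermediate model still sees $\I$ as a densely maximal independent family with $\id^*(\I)$ generated by a Ramsey filter, which is possible because Sacks forcing preserves Ramsey ultrafilters and the associated combinatorial structure. I will therefore focus on the single-step case. Fix $p \in \IS$ and a name $\dot{X}$ for a subset of $\omega$; the goal is $q \leq p$ and $h \in \FFI$ such that $q \Vdash \sigma(h) \subseteq^* \dot{X}$ or $q \Vdash \sigma(h) \cap \dot{X} =^* \varnothing$. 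First I will build a fusion sequence $p \geq p_0 \geq p_1 \geq \cdots$ together with integers $k_n \to \infty$ such that for each node $s$ in the $n$-th splitting level $T_n$ of $p_n$, the condition $(p_n)_s$ decides $\dot{X} \cap [k_n, k_{n+1})$ as some ground-model set $Y^n_s$. The fusion $q = \bigcap_n p_n$ is then a Sacks condition whose branches $b \in [q]$ index ground-model candidates $X_b := \bigcup_n Y^n_{s_n(b)}$, where $s_n(b)$ is the $n$-th splitting node of $q$ below $b$.

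The combinatorial heart is to produce a single $h \in \FFI$ working on a perfect set of branches simultaneously. In parallel with the fusion I will construct a chain $h_0 \subseteq h_1 \subseteq \cdots$ in $\FFI$: at stage $n$, iterate dense maximality of $\I$ over the $s \in T_n$ to extend the current $h$ so that each of the decisions $Y^n_s$ is locally resolved as compatible with, or disjoint from, $\sigma(h)$. The p-filter property of the Ramsey filter generating $\id^*(\I)$ then produces a set $A^*$ in the filter pseudointersecting the positive decisions accumulated along the chain; the q-filter property, applied to the partition $\{[k_n, k_{n+1}) : n < \omega\}$, further thins $A^*$ to a semi-selector meeting each fusion-interval in at most one point. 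A final pigeonhole on the fusion tree, sorting branches by whether they land in the ``$\subseteq^*$'' or ``disjoint'' class, selects a perfect subtree $q^* \leq q$ and a single $h \in \FFI$ uniformly forcing the desired relation between $\sigma(h)$ and $\dot{X}$.

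The principal obstacle is exactly this uniformization step: converting the branch-by-branch decisions coming from dense maximality into a single $h$ valid on a whole perfect subtree of $[q]$. Both halves of the Ramsey hypothesis are essential --- the p-filter property for gluing the accumulated decisions into a single pseudointersection in $\id^*(\I)$, and the q-filter property for aligning that pseudointersection with the fusion partition so that fusion-interval decisions collapse to single-bit constraints along the semi-selector. Weakening the hypothesis on $\id^*(\I)$ to merely a p-filter or merely a q-filter would likely break this step, which is why Ramseyness appears as the tight assumption in the theorem; the corresponding step in the iteration/product version is where the additional care of Shelah's preservation machinery is actually needed.
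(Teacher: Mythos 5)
First, a framing remark: the paper does not prove Theorem~\ref{ramseydense} at all --- it is imported as a black box from \cite{ShelahCon} and \cite[Corollary 37]{FischerMontoya}. So your attempt can only be compared with the proofs in those sources, whose general shape (a fusion through the Sacks tree interleaved with dense maximality and the p- and q-properties of the generating filter) your single-step plan does resemble. However, there are genuine gaps. The most serious is conceptual: both your lifting step and your closing discussion lean on ``Sacks forcing preserves Ramsey ultrafilters,'' but in the setting of this theorem the generating Ramsey filter $\mathcal{F}$ is \emph{provably never an ultrafilter}. Indeed, if $A \in \I$ and $A \in \mathcal{F} \subseteq \id^*(\I)$, then $\omega \setminus A \in \id(\I)$, so applying the definition of $\id(\I)$ to $h = \{(A,0)\}$ yields $h' \supseteq h$ with $\sigma(h') = \sigma(h') \cap (\omega \setminus A) =^* \varnothing$, contradicting independence; symmetrically $\omega \setminus A \in \mathcal{F}$ is impossible. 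Thus $\mathcal{F}$ decides no member of $\I$ (the filter coming from the paper's indestructibility tower is exactly of this kind), and Baumgartner--Laver-type ultrafilter preservation is simply not available, neither for a single Sacks step nor as the induction hypothesis through the iteration. Handling a \emph{non-ultra} Ramsey filter in tandem with dense maximality is precisely the hard content of the cited proofs. Relatedly, ``Shelah's preservation framework'' concerns countable support \emph{iterations}; the countable support \emph{product} is not covered by it and requires a separate argument, and the inductive statement you propose to check (``each intermediate model still sees $\I$ as densely maximal with $\id^*(\I)$ generated by a Ramsey filter'') is itself the theorem to be proved, not a consequence of any standard black box.

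Second, the combinatorial heart of your single-step argument --- the uniformization --- is named but not carried out, and as described it does not typecheck. The decisions $Y^n_s$ are \emph{finite} subsets of $[k_n,k_{n+1})$, so ``resolving them via dense maximality'' is vacuous: $\sigma(h') \cap Y =^* \varnothing$ holds trivially for finite $Y$. The objects that actually need resolving are the branchwise unions $X_b$, of which there are continuum many, and dense maximality produces for each one a separate, possibly incompatible, $h'$. Likewise, the p-filter property pseudointersects countably many sets \emph{from the filter}, not ``positive decisions'' (which are not filter sets), and a ``pigeonhole'' sorting a perfect set of branches into two classes does not yield a perfect subtree inside one class. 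The difficulty is not cosmetic: when $\dot{X}$ is the Sacks generic real itself, distinct nodes at a given splitting level make genuinely contradictory decisions about the same integers, so no thinning of $A^*$ alone can homogenize the decisions across a level; one must instead interleave the fusion with applications of dense maximality to ground-model sets read off the tree (using that $\omega \setminus A \in \id(\I)$ for $A \in \mathcal{F}$ to force $\sigma(h)$ into the relevant sets), with the q-filter property entering via $\ww$-bounding and the p-filter property closing the countable recursion. Your plan points at all of these tools, and correctly identifies the uniformization as the obstacle, but it does not supply the argument that overcomes it.
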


\begin{definition} Let $\IP$ be the forcing poset of all pairs $(\A, A)$ where $\A$ is a countable independent family, $A \in \wuw$, and for all $h \in \FF(\calA)$, $\sigma(h) \cap A$ is infinite. The ordering is given by $(\A',A') \leq (\A,A)$ iff $\A' \supseteq \A$ and $A' \subseteq^* A$. \end{definition}

In \cite{FischerMontoya, ShelahCon} this forcing was used to generically add a Sacks-indestructible m.i.f.  Here, rather than forcing with $\IP$ we will be using it in a purely combinatorial fashion to construct a Sacks-indestructible m.i.f. in a model of CH, and, in particular, a $\SIGMA^1_2$ m.i.f. in $L$.



The following properties of $\IP$  were proved in \cite{ShelahCon, FischerMontoya}: 

\begin{lemma} \label{method} $\;$\begin{enumerate} \renewcommand{\theenumi}{\alph{enumi}}

\item $\IP$ is $\sigma$-closed.
\item   If $(\calA,A) \in \IP$ then there exists $B \subseteq A$ such that $B \notin \A$ and $(\calA \cup \{B\}, A) \leq (\A,A)$.
\item If $Y \subseteq \omega$ is an arbitrary set, then for every $(\A,A) \in \IP$ there exists $(\B,B) \leq (\A,A)$ such that 
$$\;\;\;\;\;\;\;\;\;\;\forall h \in \FF(\B) \; \exists h' \in \FF(\B) \text{ s.t. } h' \supseteq h \text{ and } \sigma(h') \subseteq^* Y \text { or } \sigma(h') \cap Y =^* \varnothing.$$
\item Let $\E := \{E_n \mid n < \omega\}$ be a partition of $\omega$ into finite sets. Then for every $(\calA,A) \in \bbP$  there is $B\subseteq A$ such that $(\calA, B)\leq (\calA, A)$ and $|B \cap E_n| \leq 1$ for all $n$ (``$B$ is a semiselector for $\calE$.'').
\item For all $(\A,A) \in \IP$, if $X \in \id(\calA)$ then  there is $B$ such that  $(\calA, B) \leq (\calA,A)$  and $B \cap X = \varnothing$.

\end{enumerate}\end{lemma}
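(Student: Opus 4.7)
The plan is to verify (a)--(e) by appropriate diagonalisations against the countably many finite Boolean combinations determined by a countable independent family; only (c) requires real work, the rest following a common scheme.

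For (a), given a decreasing sequence $(\calA_n, A_n)_{n < \omega}$ in $\IP$, I would set $\calA := \bigcup_n \calA_n$ and enumerate $\FF(\calA) = \{h_k : k < \omega\}$ so that each $h$ appears cofinally. At stage $k$, pick $a_k \in \sigma(h_k) \cap A_k$ larger than all previous $a_j$'s and outside the finite set $\bigcup_{n<k}(A_k \setminus A_n)$. Then $A := \{a_k : k < \omega\}$ witnesses $(\calA, A) \leq (\calA_n, A_n)$ for every $n$, because $A \setminus A_n \subseteq \{a_0, \dots, a_n\}$ is finite and every $\sigma(h)$ meets $A$ at infinitely many of the stages when $h_k = h$.

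The arguments for (b), (d), and (e) use the same pattern with different targets. For (b), diagonalise over $\FF(\calA)$ with repetitions, at each stage $k$ picking two fresh points in $\sigma(h_k) \cap A$, one inside and one outside $B$; this makes both $\sigma(h) \cap B$ and $\sigma(h) \setminus B$ infinite for every $h \in \FF(\calA)$, so $\calA \cup \{B\}$ is independent, and $B \notin \calA$ follows from the freedom of construction. For (d), given the partition $\calE = \{E_n\}$, pick $b_k \in \sigma(h_k) \cap A \cap E_{n_k}$ for $n_k$ larger than any previously used index; such $n_k$ exists because $\sigma(h_k) \cap A$ meets infinitely many $E_n$'s. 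For (e), take $B := A \setminus X$; given $h \in \FF(\calA)$, the definition of $\id(\calA)$ yields $h' \supseteq h$ with $\sigma(h') \cap X =^* \varnothing$, so $\sigma(h) \cap B \supseteq \sigma(h') \cap A \setminus X$ is infinite.

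The main obstacle is (c). My plan is to exploit (a) by building a $\sigma$-decreasing sequence $(\calA_n, A_n)_{n < \omega}$ below $(\calA, A)$, processing tasks and then passing to a lower bound. Enumerate pairs $(n, h)$ with $h \in \FF(\calA_n)$ so that each is handled at some later stage; at that stage, if some $h' \supseteq h$ in $\FF(\calA_n)$ already satisfies $\sigma(h') \subseteq^* Y$ or $\sigma(h') \cap Y =^* \varnothing$, proceed to the next task. Otherwise, adjoin a new set $Z$ defined by $Z \cap \sigma(h) := Y \cap \sigma(h)$, and chosen on $\omega \setminus \sigma(h)$ by a diagonalisation as in (b) so as to split $\sigma(h^*) \cap A_n$ for every $h^* \in \FF(\calA_n)$ incompatible with $h$. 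The delicate verification is the independence of $\calA_n \cup \{Z\}$: for $h^*$ compatible with $h$, the failure of any extension of $h$ to decide $Y$ forces $\sigma(h \cup h^*) \cap Y$ and $\sigma(h \cup h^*) \setminus Y$ to be infinite, yielding the required splits of $\sigma(h^*)$ by $Z$; for $h^*$ incompatible with $h$, independence is delivered directly by the diagonalisation on $\omega \setminus \sigma(h)$. Then $h' := h \cup \{(Z, 1)\}$ gives $\sigma(h') = \sigma(h) \cap Z = \sigma(h) \cap Y \subseteq Y$, realising the desired decision, and the lower bound produced by (a) is the $(\B, B) \leq (\calA, A)$ required in (c).
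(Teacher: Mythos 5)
Your arguments for (a), (b), (d) and (e) are essentially correct, and in (b) the conclusion $B\notin\A$ is in fact automatic rather than a matter of ``freedom of construction'': if $B$ were some $C\in\A$, then $\sigma(\{(C,1)\})\cap A\setminus B=\varnothing$, contradicting the splitting you arranged. Note that the paper itself does not prove this lemma at all --- its ``proof'' is a citation of Proposition 15, Lemma 17, Corollary 19 and Lemma 14 of the Fischer--Montoya paper --- so you are supplying an argument the paper outsources, and the only genuinely hard part, (c), is exactly where your proposal breaks down.

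The gap in (c) is at the step you yourself flag as delicate. In the ``otherwise'' branch you set $Z\cap\sigma(h):=Y\cap\sigma(h)$ and argue that, since no extension of $h$ decides $Y$, every $h^*\supseteq h$ has $\sigma(h^*)\cap Y$ and $\sigma(h^*)\setminus Y$ infinite. That does give independence of $\A_m\cup\{Z\}$, but membership in $\IP$ requires more: every Boolean combination from $\A_m\cup\{Z\}$ must meet the second coordinate $A_m$ in an infinite set, i.e.\ for $h^*\supseteq h$ you need $\sigma(h^*)\cap Y\cap A_m$ and $\sigma(h^*)\cap A_m\setminus Y$ both infinite, and your hypothesis says nothing about the trace on $A_m$. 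Concretely: start from $(\A,A)$ with $\sigma(g)\setminus A$ infinite for all $g\in\FF(\A)$ (such conditions exist by (b)), and choose $Y\subseteq\omega\setminus A$ with $\A\cup\{Y\}$ independent. Then no $h'$ ever decides $Y$, so already the first task falls into the ``otherwise'' branch, and the resulting $h'=h\cup\{(Z,1)\}$ has $\sigma(h')\cap A_m=\sigma(h)\cap Y\cap A_m=\varnothing$; the new pair is not a condition, the appeal to (a) at the end is unavailable, and no eventual shrinking helps since the final $B$ satisfies $B\subseteq^*A$ and hence $\sigma(h')\cap B$ is finite. The repair cannot simply be to copy $Y$ on $\sigma(h)$: the case distinction must be driven by which of $\sigma(h)\cap A_m\cap Y$ and $\sigma(h)\cap A_m\setminus Y$ is infinite, the new set must be placed inside the corresponding piece while still meeting every other atom relative to $A_m$, and $A_m$ must be shrunk in tandem. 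That is precisely the content of the cited Lemma 17 of Fischer--Montoya, and it is where the real work of the lemma lies.
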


\begin{proof} See Proposition 15,  Lemma 17,  Corollary 19 and Lemma 14 from \cite{FischerMontoya}, respectively. \end{proof}

\begin{definition} \label{dd} We call $\{(\calA_\alpha,A_\alpha) \mid \alpha<\omega_1\}$ an \emph{indestructibility tower}, if it is a strictly decreasing sequence of $\IP$-conditions and, letting $\calA :=\bigcup_{\alpha \in\omega_1} \calA_\alpha$, the following four requirements are satisfied: 
\begin{enumerate}
\item For every $Y \subseteq \omega$, for every $h \in \FF(\A)$ there is $h' \in \FF(\A)$ with  $h' \supseteq h$ such that $\sigma(h') \subseteq^* Y \text{ or } \sigma(h') \cap Y =^* \varnothing.$
\item For every partition $\calE := \{E_n \mid n<\omega\}$ of $\omega$ into {finite} sets, there is $\alpha < \omega_1$ such that $|A_\alpha \cap E_n| \leq 1$ for all $n$ ($A_\alpha$ is a  semiselector for $\calE$).
\item For each $\alpha < \omega_1$ there is an infinite $A\subseteq^* A_\al$ such that $A\in\calA_{\al+1} \setminus \A_{\alpha}$.
\item For every  $X\in\id(\calA)$   there is an $\alpha <\omega_1$ such that $X \cap   A_\al =^* \varnothing$. 
\end{enumerate}
\end{definition}

\begin{lemma} If  $\{(\calA_\alpha,A_\alpha) \mid \alpha<\omega_1\}$ is an {indestructibility tower}, then $\calA := \bigcup_{\alpha<\omega_1}\calA_\alpha$ is a m.i.f. which remains maximal after a countable-support iteration and a countable-support product of Sacks forcing. \end{lemma}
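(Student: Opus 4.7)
The approach is to verify the three hypotheses of Theorem~\ref{ramseydense} for $\calA := \bigcup_{\alpha<\omega_1}\calA_\alpha$: that $\calA$ is independent, that it is densely maximal, and that its density dual filter $\id^*(\calA)$ is generated by a Ramsey filter together with the Fr\'echet filter. Independence is immediate since $\FF(\calA) = \bigcup_\alpha \FF(\calA_\alpha)$ and each $\calA_\alpha$, being the first coordinate of a condition in $\IP$, is independent. Dense maximality is exactly clause~(1) of Definition~\ref{dd}, and applying dense maximality with $h = \varnothing$ yields ordinary maximality. So the real work is to pin down $\id^*(\calA)$.

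The candidate filter is $\F := \{X \subseteq \omega \mid \exists\alpha<\omega_1 \; A_\alpha \subseteq^* X\}$. I would first show $\F = \id^*(\calA)$. The direction $\id^*(\calA) \subseteq \F$ is precisely clause~(4): if $Y \in \id^*(\calA)$ then $\omega\setminus Y \in \id(\calA)$, so some $A_\alpha \cap (\omega\setminus Y) =^* \varnothing$, i.e.\ $A_\alpha \subseteq^* Y$. The direction $\F \subseteq \id^*(\calA)$ amounts to checking $\omega\setminus A_\alpha \in \id(\calA)$ for each $\alpha$. Given $h \in \FF(\calA)$, pick $\beta \geq \alpha$ large enough that $\dom(h) \subseteq \calA_\beta$, and use clause~(3) to choose $A \in \calA_{\beta+1}\setminus\calA_\beta$ with $A \subseteq^* A_\beta \subseteq^* A_\alpha$. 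Since $A\notin\dom(h)$, the extension $h' := h \cup \{(A,1)\}$ lies in $\FF(\calA)$ and satisfies $\sigma(h') = \sigma(h)\cap A \subseteq^* A_\alpha$, so $\sigma(h')\cap(\omega\setminus A_\alpha) =^* \varnothing$, as required.

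Next I would show that $\F$ is a Ramsey filter. For the p-filter property: the sequence $\langle A_\alpha\rangle$ is $\subseteq^*$-decreasing because the $\IP$-conditions $(\calA_\alpha,A_\alpha)$ are strictly decreasing. Given countably many $X_n \in \F$ with witnesses $A_{\alpha_n}\subseteq^* X_n$, set $\beta := \sup_n\alpha_n < \omega_1$; then $A_\beta \subseteq^* A_{\alpha_n} \subseteq^* X_n$ for every $n$, so $A_\beta$ is a pseudointersection lying in $\F$. The q-filter property is exactly clause~(2): for any partition $\calE$ of $\omega$ into finite sets, some $A_\alpha$ is a semiselector, and $A_\alpha \in \F$. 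With $\F$ identified as a Ramsey filter generating $\id^*(\calA)$ (together with Fr\'echet, absorbed by the $\subseteq^*$ in the definition of $\F$), Theorem~\ref{ramseydense} delivers the desired Sacks indestructibility in both the iteration and the product.

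The main obstacle is the containment $\F \subseteq \id^*(\calA)$, i.e.\ showing $\omega\setminus A_\alpha \in \id(\calA)$. This is where clause~(3) plays its essential role: without the guarantee that cofinally many elements of $\calA$ appear inside $A_\alpha$ modulo finite, one could not extend an arbitrary $h \in \FF(\calA)$ to concentrate its $\sigma$-region inside $A_\alpha$. The other verifications are essentially direct translations of the four axioms defining an indestructibility tower.
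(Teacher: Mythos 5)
Your proof is correct and follows essentially the same route as the paper's: dense maximality from clause (1), the identification of $\id^*(\calA)$ as the filter generated by the $A_\alpha$'s via clause (4) in one direction and the $h' = h \cup \{(B,1)\}$ extension from clause (3) in the other, and the Ramsey property from the tower structure (p-filter) plus clause (2) (q-filter). The extra details you supply (explicit independence of $\calA$, the $\sup_n \alpha_n < \omega_1$ step) are fine but not points of divergence.
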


\begin{proof} In light of Theorem  \ref{ramseydense} it suffices to show that $\calA$ is a densely maximal family and that $\id^*(\I)$ is generated by a Ramsey filter and the filter of cofinite sets. Dense maximality follows immediately from condition (1) of Definition \ref{dd}. For the second property, we show the following: 

\p \emph{Claim.} {$\id(\calA)$ is generated by $\{\omega \backslash A_\alpha \mid \alpha<\omega_1\}$ and $[\omega]^{{<}\omega}$.}

\begin{proof}[Proof of claim] Since by condition (4) of Definition \ref{dd}, for every $X \in \id(\calA)$ there exists $\alpha$ such that $X \subseteq^* \omega \setminus A_\alpha$, it suffices to show that  $\omega \setminus A_\alpha \in \id(\calA)$ for every $\alpha$. Let $h \in \FF(\A)$ be arbitrary. Let $\beta \geq \alpha$ be such that $h \in \FF(\A_\beta)$. By (3) there is an infinite $B \subseteq^* A_\beta$ such that $B \in \calA_{\beta+1}\setminus \A_\beta$. In particular, $B \notin \dom(h)$, so we can extend $h$ to form  $h' := h \cup \{(B,1)\}$. Then $h' \in \FF(\A_{\beta+1})$, and moreover $\sigma(h') \subseteq B \subseteq^* A_\beta \subseteq^* A_\alpha$. This shows that $\omega \setminus A_\alpha \in \id(\A)$ and completes the proof.  \qedhere (Claim) \end{proof} 

\p Notice that since $\{A_\alpha \mid \alpha<\omega_1\}$ is a tower, the filter it generates is a p-filter. Moreover, by condition (2), it is a q-filter, and thus a Ramsey filter, as we had to show. \end{proof}

\begin{theorem} $\;$ \

\begin{enumerate} 
\item  If CH holds then there exists an indestructibility tower. 
\item If $V=L$ then there exists a $\SIGMA^1_2$-definable indestructibility tower.
\end{enumerate} \end{theorem}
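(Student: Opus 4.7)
The plan is a transfinite recursion of length $\omega_1$ for both parts, with part (2) obtained by making the recursion canonical using $<_L$.

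For (1), I would enumerate in a single $\omega_1$-sequence all the ``tasks'' needed to meet Definition~\ref{dd}: every $Y \subseteq \omega$ (for condition~(1)), every partition $\E = \{E_n \mid n<\omega\}$ of $\omega$ into finite sets (for~(2)), and every pair $(\be, X)$ with $\be < \omega_1$ and $X \subseteq \omega$ (for~(4)). Under $\CH$ these comprise $\aleph_1$ many tasks. I would then recursively build $\{(\calA_\al, A_\al) \mid \al < \omega_1\}$ as a strictly decreasing sequence in $\IP$: passing through countable-cofinality limit stages by $\sigma$-closure (Lemma~\ref{method}(a)); and at each successor stage $\al+1$ first invoking Lemma~\ref{method}(b) to add a fresh subset of $A_\al$ to $\calA_\al$ (which secures Definition~\ref{dd}(3)), then strengthening further using Lemma~\ref{method}(c), (d), or~(e) depending on which type of task is scheduled at stage~$\al$. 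For a $(4)$-task $(\be, X)$ no additional shrinking is required unless $X \in \id(\calA_\al)$, in which case~(e) yields $A_{\al+1} \cap X =^* \varnothing$. Conditions (1)--(3) of Definition~\ref{dd} then follow directly from the bookkeeping, and~(4) follows because by Lemma~\ref{ideallemma} every $X \in \id(\calA)$ already lies in some $\id(\calA_\al)$ and was therefore handled at a later stage.

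For~(2), I would rerun the same construction inside $L$, using the canonical $\SIGMA^1_2$ well-order $<_L$ of $\IR$ to make every choice canonical: at each application of Lemma~\ref{method}, replace ``some witness'' by ``the $<_L$-least witness,'' and similarly $<_L$-canonize the bookkeeping enumerations. This uniquely determines the resulting tower. To extract a $\SIGMA^1_2$ definition of $\calA := \bigcup_{\al < \omega_1} \calA_\al$, I would write
\[
A \in \calA \iff \exists x \in \IR\; \exists \gamma\;\bigl[\,x\text{ codes an initial segment }\langle (\calA_\be, A_\be) : \be \le \gamma\rangle\text{ of the canonical construction and } A \in \calA_\gamma\,\bigr].
\]
Every step of the canonical construction is a $<_L$-minimization whose witness is located in a countable $L_\delta$, so ``being a correct initial segment'' is arithmetic in $x$ modulo the predicate ``$x \in L$,'' which is $\SIGMA^1_2$; the whole statement is then $\SIGMA^1_2$.

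The main obstacle I expect lies in condition~(4) of Definition~\ref{dd}: one cannot enumerate $\id(\calA)$ directly, since $\calA$ is not yet built, and the resolution is precisely the reflection statement Lemma~\ref{ideallemma} combined with interleaved bookkeeping of pairs $(\be, X)$ rather than of $X$'s alone. For part~(2), the remaining care is standard: one must verify that the ``$<_L$-least witness'' at each step lies in a countable level of $L$, so that the canonical construction can be coded by reals and the complexity calculation above is valid, following the coding techniques of~\cite{MillerPI11}.
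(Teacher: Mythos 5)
Your proposal follows essentially the same route as the paper: a transfinite recursion of length $\omega_1$ through the poset $\IP$, using Lemma~\ref{method} at successor stages and $\sigma$-closure at limits, with part (2) obtained by $<_L$-minimizing every choice and then expressing membership in the tower as a $\SIGMA^1_2$ statement by reflection to countable levels of $L$ (the paper phrases the coding through countable well-founded models of $\ZFC^*+V{=}L$ rather than through reals coding initial segments of the construction, but this is the same standard device). Your treatment of condition (4) of Definition~\ref{dd} --- scheduling pairs $(\be,X)$ so that each $X$ is reconsidered cofinally often, and then quoting Lemma~\ref{ideallemma} --- matches the paper's verification.

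The one place where you deviate, and where the write-up has a gap, is condition (1) of Definition~\ref{dd}. You handle each $Y\subseteq\omega$ at a \emph{single} stage $\al$ via Lemma~\ref{method}(c), which yields only: for all $h\in\FF(\calA_{\al+1})$ there is $h'\in\FF(\calA_{\al+1})$ with $h'\supseteq h$ and $\sigma(h')\subseteq^* Y$ or $\sigma(h')\cap Y=^*\varnothing$. Condition (1), however, quantifies over all $h\in\FF(\calA)$ for the final uncountable family $\calA$, and such an $h$ may mention sets added after stage $\al$; so this does \emph{not} ``follow directly from the bookkeeping.'' The paper avoids the issue by reconsidering \emph{all} previously listed sets $X_\be$, $\be\le\al$, at every stage $\al$ (using $\sigma$-closure to perform countably many tasks per stage), so that every pair $(Y,h)$ is eventually treated at a stage $\al$ with $h\in\FF(\calA_\al)$. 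Your single-visit scheme can be repaired by a short additional argument: given $h\in\FF(\calA)$, write $h=h_0\cup h_1$ with $h_0=h\till\calA_{\al+1}$, take the witness $h_0'\supseteq h_0$ in $\FF(\calA_{\al+1})$, and observe that $h':=h_0'\cup h_1$ is a well-defined extension of $h$ with $\sigma(h')\subseteq\sigma(h_0')$, and that both alternatives ($\subseteq^* Y$ and $\cap\, Y=^*\varnothing$) are inherited by subsets. Either add this observation or adopt the paper's revisiting scheme; as written, the verification of condition (1) is incomplete.
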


\begin{proof} We give a detailed proof of the first assertion and then show how to adapt it to get a $\SIGMA^1_2$ construction in $L$.

\bigskip \noindent (1)  Let $\{X_\alpha \mid \alpha<\omega_1\}$ enumerate all subsets of $\omega$ and let $\{\E_\alpha \mid \alpha<\omega_1\}$ enumerate all partitions of $\omega$ into finite sets.

\p Let $(\A_0, A_0) \in \IP$ be any condition. At stage $\alpha$, suppose $(\A_\beta, A_\beta)$ for all $\beta \leq \alpha$ has been constructed. The new condition is designed in four steps:

\begin{itemize}
\item Consider the sets $\{X_\beta \mid \beta \leq  \alpha\}$. By repeatedly applying Lemma \ref{method} (3) in countably many steps, followed by $\sigma$-closure which holds due to Lemma \ref{method} (1), we find an extension $(\A'_\alpha, A'_\alpha) \leq (\A_\alpha,A_\alpha)$ such that, for all $\beta \leq \alpha$, for all $h \in \FF(\A_\alpha)$ (not necessarily for all $h \in \FF(\A'_\alpha)$) there exists $h' \in \FF(\A'_\alpha)$, such that $h' \supseteq h$ and $\sigma(h') \subseteq^* X_\beta$ or $\sigma(h') \cap X_\beta =^* \varnothing$.

\item Consider the partition $\E_\alpha = \{E_\alpha^n \mid n<\omega\}$. 
By Lemma \ref{method} (4) we find an extension  $(\A''_\alpha, A''_\alpha) \leq (\A'_\alpha, A'_\alpha)$ such that $|A''_\alpha \cap E^n_\alpha| \leq 1$ for all $n$ ($A''_\alpha$ is a semi-selector for $\E_\alpha$).

\item Consider (again) the sets $\{X_\beta \mid \beta \leq \alpha\}$. By repeatedly applying Lemma \ref{method} (5) in countably many steps, followed by $\sigma$-closure, we find a further extension $(\A'''_\alpha, A'''_\alpha) \leq (\A''_\alpha,A_\alpha'')$ such that, for every $\beta$, if $X_\beta \in \id(\A_\alpha)$ (which  implies that $X_\beta \in \id(\tilde{A})$ for any $\tilde{A}$ extending $\A_\alpha$), then $A_\alpha''' \cap X_\beta =^* \varnothing$.

\item Finally, use Lemma \ref{method} (2) to find a $B \subseteq^* A'''_\alpha$, such that $B \notin \A'''_\alpha$, and $(\A'''_\alpha \cup \{B\}, A'''_\alpha)$ is a condition. We let $(\A_{\alpha+1}, A_{\alpha+1})$ be that condition. 
\end{itemize} 
This completes the construction of the induction step (in steps 2 and 3 we could in fact have taken $\A'''_\alpha = \A''_\alpha = \A'_\alpha$ but that is not relevant). At limit stages $\lambda$, use $\sigma$-closure to again find a condition $(\A_\lambda, A_\lambda)$ which extends all $(\A_\alpha,A_\alpha)$ for $\alpha<\lambda$. 

\p It is now easy to verify that $\{(\A_\alpha,A_\alpha) \mid \alpha<\omega_1\}$ satisfies conditions (1)--(4), where for (4) we use the fact that if $X \in \id(\A)$ then $X \in \id(\A_\alpha)$ for some $\alpha<\omega_1$, see Lemma \ref{ideallemma}.

 \bigskip \noindent (2) If   $V=L$ then repeat the same proof, but additionally, pick the canonical well-order $<_L$ of the reals of $L$ to well-order the sequences  $\{X_\alpha \mid \alpha<\omega_1\}$ and  $\{\E_\alpha \mid \alpha<\omega_1\}$. At each step $\alpha$ of  the construction, the preceding proof shows how to find  an $(\A_{\alpha+1}, A_{\alpha+1})$ satisfying certain requirements. Now, we make sure to always pick the  \emph{$<_L$-least} condition  $(\A_{\alpha+1}, A_{\alpha+1})$  satisfying the same requirements. 

\p This way, it follows that the construction at each step $\alpha$ only depends on the  preceding $\beta \leq \alpha$ and is thus absolute between $L$ and an  $L_\delta$ for some appropriate $\delta < \omega_1$. More precisely, if $\mathfrak{A} = \{(\A_\alpha,A_\alpha) \mid \alpha<\omega_1\}$,  then there is a formula $\Phi$ defining $\mathfrak{A}$ in an absolute way, i.e., $(\A,A) \in \mathfrak{A}$ iff $\Phi(\A,A)$ iff there exists $\delta < \omega_1$ such that $L_\delta \models \Phi(\A,A)$. 

\p Let $\ZFC^*$ be a sufficiently large fragment of $\ZFC$ such that if a transitive model $M$ satisfies $\ZFC^* + V=L$ then $M = L_\xi$ for some $\xi$. Now we can write  $\Phi(\A,A)$ iff $\exists E \subseteq \omega \times \omega$ such that \begin{itemize}
\item $E$ is well-founded,
\item $(\omega,E) \models \ZFC^* + V=L$,

\item $(\omega,E) \models \Phi(\pi^{-1}(\A,A))$, where $\pi: (\omega,E) \cong (M,\epsilon)$ is the transitive collapse of $(\omega,E)$. \end{itemize}
By standard methods (cf. \cite[Proposition 13.8 ff.]{Kanamori}) the two latter statements are arithmetic and well-foundedness  is $\PI^1_1$. Thus $\Phi(\A,A)$ is equivalent to a $\SIGMA^1_2$ statement. \end{proof}

\begin{proof}[Proof of Theorem \ref{mainsackstheorem}]  Let $\mathfrak{A} = \{(\A_\alpha,A_\alpha) \mid \alpha<\omega_1\}$ be a $\SIGMA^1_2$-definable indestructibility tower in $L$. If $V$ is the extension in the iteration/product of Sacks forcing, then $\A = \bigcup_{\alpha<\omega_1}  \A_\alpha$ is still a maximal independent family with a $\SIGMA^1_2$ definition. By Theorem \ref{equivalence}, there exists a $\PI^1_1$ m.i.f. as well. \end{proof}

\begin{remark} In Shelah's proof of the consistency of $\mathfrak{i}<\mathfrak{u}$ \cite{ShelahCon} a forcing closely related to Sacks was used, which increases $\mathfrak{u}$ as well as the continuum (note that in the Sacks model $\mathfrak{u} < \cont$). By a slight modification of the method in this section, it is easy to construct a  $\SIGMA^1_2$ m.i.f. which is not only Sacks-indestructible, but indestructible by the poset from \cite{ShelahCon}. This shows that the witness for the m.i.f.  in the proof of the consistency of $\mathfrak{i} < \mathfrak{u}$ can in fact be $\PI^1_1$-definable. 
\end{remark}


\section{$\aleph_1$-Borel and $\aleph_1$-closed m.i.f's} \label{cardinals}

The question of definable m.i.f's is closely related to questions concerning certain cardinal invariants (compare with \cite{BrendleKhomskiiMad}).

\begin{Def} $\;$  \

\begin{enumerate}

\item $\ii$ is the least size of a m.i.f.
\item $\iicl$ is the least $\kappa$ such that there exists a collection $\{C_\alpha \mid \alpha < \kappa\}$, where each $C_\alpha$ is a \emph{closed} independent family, and $\bigcup_{\alpha<\kappa} C_\alpha$ is a m.i.f.

\item $\iiB$ is the least  $\kappa$ such that there exists a collection $\{B_\alpha \mid \alpha < \kappa\}$, where each $B_\alpha$ is a \emph{Borel} independent family, and $\bigcup_{\alpha<\kappa} B_\alpha$ is a m.i.f.
\end{enumerate} \end{Def}

It is clear that $\iiB \leq \iicl \leq \ii$. It is also known that $\mathfrak{r} \leq \ii$,  $\mathfrak{d} \leq \ii$~\cite[Proposition 8.12 and Theorem 8.13]{HandbookCardinals}, and $\non (\M) \leq \ii$~\cite[Theorem 3.6]{BHH04},
where $\mathfrak{d}$,  $\mathfrak{r}$, and $\non (\M)$ denote the dominating and reaping numbers, and the smallest size of a nonmeager set, respectively. 
Notice that if $\iiB > \aleph_1$, then there are no $\SIGMA^1_2$ m.i.f.'s (since $\SIGMA^1_2$-sets are $\aleph_1$-unions of Borel sets). 
$\cov (\M)$ is the least cardinality of a family of meager sets covering the real line. 

\begin{theorem} $\cov(\M) \leq \iiB$. \end{theorem}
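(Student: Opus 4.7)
The plan is to adapt the proof of Lemma~\ref{b} to the setting where the m.i.f.\ is presented as a small union of Borel independent families. Suppose for contradiction that $\I = \bigcup_{\alpha < \kappa} B_\alpha$ is a m.i.f.\ with each $B_\alpha$ a Borel independent family and $\kappa < \cov(\M)$. Define the sets $H$ and $K$ exactly as in the proof of Lemma~\ref{b}, so that $\PP(\omega) = H \cup K$ by maximality of $\I$. The strategy is to locate an analytic ``piece'' of $H$ (or $K$) which is comeager on some basic open set, and then feed it into Corollary~\ref{cora} and the $\Delta$-system argument of Lemma~\ref{b}.

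The structural observation is that $H$ splits canonically into $\kappa$ many analytic pieces. For each $n, \ell \in \omega$ and each $\vec\alpha \in \kappa^n$, $\vec\beta \in \kappa^\ell$, set
\[
H_{n,\ell,\vec\alpha,\vec\beta} := \{X \mid \exists a_1 \in B_{\alpha_1}, \dots, a_n \in B_{\alpha_n},\, b_1 \in B_{\beta_1}, \dots, b_\ell \in B_{\beta_\ell},\text{ pairwise distinct, s.t. } \sigma(\bar a; \bar b) \subseteq^* X\},
\]
and analogously $K_{n,\ell,\vec\alpha,\vec\beta}$. Each such piece is the projection along finitely many real quantifiers of a set which is Borel in the Borel codes of the $B_{\alpha_i}, B_{\beta_j}$; hence it is analytic and in particular has the Baire property. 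There are $\kappa \cdot \aleph_0 = \kappa$ such pieces in total, and $H = \bigcup_s H_s$, $K = \bigcup_s K_s$.

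If every $H_s$ and every $K_s$ were meager, then $\PP(\omega) = H \cup K$ would be a union of at most $\kappa < \cov(\M)$ meager sets, which is impossible. So some piece, say $H_s$, is non-meager; by the Baire property, $H_s$ is comeager in some basic open $[t]$, and Corollary~\ref{cora} then yields a perfect a.d.\ tree $T$ with $[T] \subseteq H_s \cap [t] \subseteq H$. At this point the $\Delta$-system argument in the proof of Lemma~\ref{b} applies verbatim: choosing witnesses $\bar a^X, \bar b^X \subset \I$ for each $X \in [T]$ and thinning to an uncountable $\Delta$-system with a common root of fixed ``role'', one obtains distinct $X, Y \in [T]$ with $\sigma(\bar a^X \cup \bar a^Y;\, \bar b^X \cup \bar b^Y) \subseteq^* X \cap Y =^* \varnothing$, contradicting the independence of $\I$. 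The symmetric case (some $K_s$ non-meager) is handled identically using a perfect a.c.\ tree inside $K$.

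I do not foresee a serious obstacle: the argument is essentially a recombination of Lemma~\ref{b} with Corollary~\ref{cora} and the defining property of $\cov(\M)$. The one point that requires a little care is verifying that each piece $H_{n,\ell,\vec\alpha,\vec\beta}$ is genuinely analytic, which reduces to writing out the matrix of the existential quantifier as a Borel condition in the reals $a_i, b_j, X$ and the Borel codes of the $B_{\alpha_i}, B_{\beta_j}$; this is routine.
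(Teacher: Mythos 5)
Your proposal is correct and follows essentially the same route as the paper: the paper decomposes $H \cup K$ into the analytic pieces $H_E, K_E$ indexed by finite $E \subseteq \kappa$ (rather than by tuples $\vec\alpha, \vec\beta$, a merely cosmetic difference), uses $\kappa < \cov(\M)$ to find a non-meager piece, localizes via the Baire property, applies Corollary~\ref{cora} to extract a perfect a.d.\ (resp.\ a.c.) tree, and concludes with the $\Delta$-system argument of Lemma~\ref{b}. No gaps.
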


\begin{proof} Let $\kappa < \cov(\M)$ and let $\{B_\alpha \mid \alpha<\kappa\}$ be a collection of Borel independent families. We need to show that $\I := \bigcup_{\alpha<\kappa} B_\alpha$ is not maximal. 

\p Suppose otherwise, and for every finite $E \subseteq \kappa$ define
$$ \hskip-0.75cm H_E := \{X \mid \exists \bar{a}, \bar{b}  \in \bigcup_{\alpha \in E} B_\alpha \;  \text{ s.t. }  \sigma(\bar{a}; \bar{b}) \subseteq^* X\}$$
$$ K_E := \{X \mid \exists \bar{a}, \bar{b}  \in  \bigcup_{\alpha \in E} B_\alpha   \;  \text{ s.t. }     \sigma(\bar{a}; \bar{b}) \cap X =^* \varnothing\}. $$ 
Notice that by maximality of $\I = \bigcup_{\alpha<\kappa} B_\alpha$, we have
$$\bigcup \{H_E \cup K_E \mid E \in [\kappa]^{<\omega} \} = \wuw.$$
Since $\kappa < \cov(\M)$, there must exist a finite $E \subseteq \kappa$ such that $H_E \cup K_E \notin \M$. Suppose  $H_E \notin \M$:  since $H_E$ is analytic, there exists a basic open $[s]$ with $[s] \subseteq^* H_E$ (where $\subseteq^*$ means ``modulo meager''). By Corollary \ref{cora}, we can construct a  perfect a.d.\ tree $T$ with $[T]  \subseteq H_E$. But then, by the argument from Lemma \ref{b}, it follows that $\bigcup_{\alpha \in E} B_\alpha$ is not independent, contrary to the assumption. Likewise, if $K_E \notin \M$ then using Corollary \ref{cora}, there exists a perfect a.c.\ tree $S$ with $[S] \subseteq K_E$, and the rest is the same. \end{proof}

We end this paper with the following open questions:

\begin{question} $\;$ \begin{enumerate}
\item Is the existence of a $\PI^1_1$ m.i.f. consistent with $\mathfrak{i} > \aleph_1$? Is it consistent with $\dd > \aleph_1$, $\mathfrak{r} > \aleph_1$, or
$\non (\M) > \aleph_1$?
\item What about a $\PI^1_2$ m.i.f.?
\item Is it consistent that $\iicl < \dd$ or $\iiB < \dd$?
\item Is it consistent that $\iicl < \mathfrak{r}$ or $\iiB < \mathfrak{r}$? 
\item Is it consistent that $\iicl < \non (\M)$ or $\iiB < \non (\M)$?
\item Is it consistent that $\iicl < \ii$ or $\iiB < \ii$?
\end{enumerate} \end{question}

We note that a positive answer to either of (3), (4), or (5) implies a positive answer to (6).
Also, (1) is closely related to (3) -- (6): if, e.g., $\iiB \geq \dd$ in $\ZFC$, then the existence of a $\PI^1_1$ m.i.f. implies $\dd = \aleph_1$ because
$\PI^1_1$ sets are $\aleph_1$ Borel. If, on the other hand, $\iiB < \dd$ is consistent, then, by Zapletal's work~\cite{Za04}, this consistency should hold
in the Miller model; that is, a countable support iteration of Miller forcing should preserve a witness for $\iiB = \aleph_1$, and doing
such an iteration over $L$ then would preserve such a witness with a $\SIGMA^1_2$ definition. By Theorem~\ref{equivalence}, the consistency
of the existence of a $\PI^1_1$ m.i.f. with $\dd > \aleph_1$ would hold in the Miller model.

Similar results have been proved for mad families in~\cite{BrendleKhomskiiMad}: while $\bbb \leq \aaa$ in $\ZFC$, $\aaB = \aacl < \bbb$ is consistent
and so is the existence of a $\PI^1_1$ mad family with $\bbb > \aleph_1$. However, definable m.i.f.'s are more easily destroyed than
definable mad families: for example, the fact that ``$\ZF\; + $ there are no mad families'' is equiconsistent with $\ZFC$ has only been proved recently by Horowitz and Shelah~\cite{HSta}, and it is a well-known open question whether $\AD$ implies that there are no mad families (whereas for maximal independent families, both statements are an easy consequence of Corollary \ref{corb}).


\bibliographystyle{amsplain}
\bibliography{../../../10_Bibliography/Khomskii_Master_Bibliography}{}

\end{document}